\numberwithin{equation}{section}
\newtheorem{Theorem}{Theorem}[section]
\newtheorem{Corollary}[Theorem]{Corollary}
\newtheorem{Lemma}[Theorem]{Lemma}
\newtheorem{Proposition}[Theorem]{Proposition}
\theoremstyle{definition}
\newtheorem{Definition}[Theorem]{Definition}
\newtheorem{Example}[Theorem]{Example}
\newtheorem{Remark}[Theorem]{Remark}
\newtheorem{notation}[Theorem]{Notation}
\newcommand{\BBR}{\mathbb{R}}
\newcommand{\BBC}{\mathbb{C}}
\newcommand{\BBZ}{\mathbb{Z}}
\newcommand{\BBN}{\mathbb{N}}
\newcommand{\BBS}{\mathbb{S}}
\newcommand{\CG}{\mathcal{G}}
\newcommand{\CC}{\mathcal{C}}
\newcommand{\CH}{\mathcal{H}}
\newcommand{\CK}{\mathcal{K}}
\renewcommand{\CD}{\mathcal{D}}
\newcommand{\stardot}{\overset{{\bullet}}{*}}
\newcommand{\starcirc}{\overset{\circ}{*}}
\newcommand{\inv}{^{-1}}
\newcommand{\darrow}{\arrow[d, shift right] \arrow[d, shift left]}
\newcommand{\rarrow}{\arrow[r, shift right] \arrow[r, shift left]}
\newcommand{\grpd}{\rightrightarrows}
\DeclareMathOperator{\pr}{pr}
\DeclareMathOperator{\Mat}{Mat}
\DeclareMathOperator{\SU}{SU}
\DeclareMathOperator{\GL}{GL}
\begin{document}

\allowdisplaybreaks

\newcommand{\arXivNumber}{2312.00341}

\renewcommand{\PaperNumber}{093}

\FirstPageHeading

\ShortArticleName{Convolution Algebras of Double Groupoids and Strict 2-Groups}

\ArticleName{Convolution Algebras of Double Groupoids\\ and Strict 2-Groups}

\Author{Angel ROM\'AN~$^{\rm a}$ and Joel VILLATORO~$^{\rm b}$}

\AuthorNameForHeading{A.~Rom\'an and J.~Villatoro}

\Address{$^{\rm a)}$~Washington University in St.~Louis, USA}
\EmailD{\href{mailto:angelr@wustl.edu}{angelr@wustl.edu}}

\Address{$^{\rm b)}$~Indiana University Bloomington, USA}
\EmailD{\href{mailto:joeldavidvillatoro@gmail.com}{joeldavidvillatoro@gmail.com}}

\ArticleDates{Received February 12, 2024, in final form October 09, 2024; Published online October 19, 2024}

\Abstract{Double groupoids are a type of higher groupoid structure that can arise when one has two distinct groupoid products on the same set of arrows. A particularly important example of such structures is the irrational torus and, more generally, strict 2-groups. Groupoid structures give rise to convolution operations on the space of arrows. Therefore, a double groupoid comes equipped with two product operations on the space of functions. In this article we investigate in what sense these two convolution operations are compatible. We use the representation theory of compact Lie groups to get insight into a certain class of 2-groups.}

\Keywords{Lie groupoids; convolution; double groupoids; 2-groups; Haar systems}

\Classification{46L05; 58B34; 46L87; 18G45; 18N10; 58H05}

\section{Introduction}
Given a Lie groupoid $ \CG \grpd M $ and a Haar system on $ \CG $ one can associate a $ C^*$-algebra $ C^* (\CG) $. This relationship is the fundamental link between Lie groupoids and noncommutative geometry.
The algebra $ C^*(\CG) $ can, in some sense, be thought of as the (noncommutative) algebra of functions on the differentiable stack $ [M/\CG] $.
This point of view is justified by a theorem of Muhly, Renault, and Williams~\cite[Theorem 2.8]{MuhlyRenaultWilliams} which says that if $ \CG $ and $ \CH $ are Morita equivalent groupoids (i.e., they represent the same stack), then the associated $ C^* $ algebras $ C^* (\CG ) $ and $ C^*(\CH)$ are strongly Morita equivalent.
In particular, if $ \CG $ is Morita equivalent to a manifold then $ C^*(\CG) $ is strongly Morita equivalent to the algebra of functions on the smooth quotient space $ M / \CG $.

The objective of this article is to try to better understand $C^* $ algebras of some higher structures (namely double groupoids). In principle, these algebras should be a model for a type of ``noncommutative groupoid''.

To better explain what we mean by this, let us consider our model example: The noncommutative torus is a noncommutative space that arises from the convolution algebra of the action groupoid $ \BBZ \ltimes S^1 $ where $ \BBZ $ acts on $ S^1 $ by irrational rotations. The groupoid composition gives rise to a convolution product on the vector space of compactly supported functions $ C_c^\infty\big(\BBZ \ltimes S^1\big) $. As we mentioned earlier, we think of this algebra as a model for the algebra of ``functions'' on the space $ S^1/\BBZ $.
However, we should note that $ S^1/\BBZ$, while not smooth, is a perfectly well-defined group.
Since $ C_c^\infty\big(\BBZ \ltimes S^1\big) $ represents the functions on this singular quotient, we should, in principle, expect it to inherit two algebra structures. One of the algebra structures should be analogous to the product arising from pointwise multiplication while the other should be analogous to the \emph{group} convolution product.

Now observe that $ \BBZ \ltimes S^1 $ actually has two natural groupoid structures. One, denoted $ \circ$, arises from treating $ \BBZ \ltimes S^1 $ as an action groupoid while the other, denoted $ \bullet $, arises from treating $ \BBZ \ltimes S^1 $ as a Cartesian product of groups. These two product operations are compatible in the sense that
\begin{equation}\label{eqn:intro.compat}
(a \circ b) \bullet ( c \circ d) = (a \bullet c) \circ (b \bullet d )
\end{equation}
for all $ a,b,c,d \in \BBZ \ltimes S^1 $, suitably composable. The operations $ \circ $ and $ \bullet $ give rise to two different convolution operations on $ C_c^\infty\big(\BBZ \ltimes S^1\big) $ which we denote \smash{$ \starcirc $} and \smash{$ \stardot $}. One of the main aims of this article is to consider the question ``In what sense are \smash{$ \stardot $} and \smash{$ \starcirc$} compatible?''

The general context for this question is that of a double groupoid. A double groupoid is a~groupoid object in the category of groupoids. From the point of view of stacks, such an object can be thought of as a (strict) groupoid in the category of stacks. From the noncommutative geometry point of view, the convolution algebras of a double groupoid should be a type of ``noncommutative groupoid''.

The primary feature of a double groupoid $ \CG $ is that it has two product operations $ \bullet $ and $ \circ $ and these two product operations are \emph{compatible} in the sense that they satisfy equation~\eqref{eqn:intro.compat} whenever both sides of the equation are well defined.
If we choose Haar systems, then we get two convolution operations \smash{$ \starcirc $} and \smash{$ \stardot $} on $ C_c^\infty(\CG) $. Now let us break the symmetry of the situation by considering \smash{$ \starcirc$} to be the algebra of functions on the noncommutative space defined by $ \circ $. Then, intuitively, \smash{$ \stardot$} should correspond to ``convolution''.

When discussing the compatibility of \smash{$ \starcirc $} and \smash{$ \stardot $}, the naive guess would be to assume that they satisfy a version of equation~\eqref{eqn:intro.compat}. However, the situation is not so simple. There is well-known lemma of Eckmann and Hilton which provides us with some hints as to why one should be careful.
\begin{Lemma}[Eckmann--Hilton~\cite{EckmannHilton}]
   Suppose a set $ A $ is equipped with globally defined, unital, binary operations $ \circ $ and $ \bullet $ and assume that these two products satisfy equation~\eqref{eqn:intro.compat}. Then $ \circ $ is associative, commutative, and $ \circ = \bullet $.
\end{Lemma}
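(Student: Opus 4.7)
The plan is to exploit the interchange law \eqref{eqn:intro.compat} by plugging in carefully chosen combinations of the two units. Write $e_\circ$ and $e_\bullet$ for the units of $\circ$ and $\bullet$ respectively. Each of the four conclusions (equal units, $\circ = \bullet$, commutativity, associativity) will fall out of a single substitution into the interchange identity, so the proof is essentially bookkeeping rather than a clever construction.

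The first step is to show $e_\circ = e_\bullet$. I would apply the interchange law to $(e_\circ \circ e_\bullet) \bullet (e_\bullet \circ e_\circ) = (e_\circ \bullet e_\bullet) \circ (e_\bullet \bullet e_\circ)$; the left side collapses to $e_\bullet \bullet e_\bullet = e_\bullet$ using that $e_\circ$ is a unit for $\circ$, while the right side collapses to $e_\circ \circ e_\circ = e_\circ$ using that $e_\bullet$ is a unit for $\bullet$. Denote the common unit by $e$.

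Next I would prove $\circ = \bullet$ and commutativity in two parallel strokes. Plugging $b = c = e$ into the interchange identity gives $(a \circ e) \bullet (e \circ d) = (a \bullet e) \circ (e \bullet d)$, which simplifies to $a \bullet d = a \circ d$, so the two products coincide. Plugging $a = d = e$ instead gives $(e \circ b) \bullet (c \circ e) = (e \bullet c) \circ (b \bullet e)$, that is, $b \bullet c = c \circ b$; combined with $\circ = \bullet$ this yields $b \circ c = c \circ b$. Finally, setting $c = e$ in the interchange law produces $(a \circ b) \bullet d = a \circ (b \bullet d)$, which under $\circ = \bullet$ is precisely associativity.

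There is no real obstacle here; the argument is purely algebraic and the only ``trick'' is the choice of which slots to fill with units. The lemma is stated as motivation for the later discussion, so I would keep the proof short and purely formal, emphasizing that the rigidity of the interchange law under globally defined unital operations is what forces the partial nature of $\circ$ and $\bullet$ in the double groupoid setting.
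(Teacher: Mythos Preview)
Your argument is correct and is exactly the standard Eckmann--Hilton proof. The paper itself does not supply a proof of this lemma at all; it is stated with a citation to \cite{EckmannHilton} and used only as motivation, so there is nothing to compare against.
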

At first glance, this lemma seems to suggest that there are no interesting double groupoids. However, the key point is that the binary operations in the Eckmann--Hilton lemma must be globally defined. Furthermore, a problem arises once we pass to the convolution algebras \smash{$ \stardot$} and~\smash{$ \starcirc$} which are both globally defined on $ C_c^\infty(\CG) $. Hence, the lemma of Eckmann and Hilton suggests that the naive notion of compatibility between \smash{$ \starcirc$} and \smash{$ \stardot$} does not hold.\footnote{Since convolution algebras are not typically unital, we cannot directly apply the Eckmann--Hilton lemma here. However, convolution algebras are not that far from being unital and one could reasonably expect that the argument can be adapted to this setting.}

There two main aims for this paper. One is to establish some basic facts and definitions about the general case. However, as we will see, a comprehensive study of the compatibility properties for the general case is likely very complicated. After discussing the general case, we will study three main examples. The goal with our examples is to find formulas resembling equation~\eqref{eqn:intro.compat} that \emph{are} true for the two convolution products. Our three main examples are (1)~the irrational torus, (2)~compact Lie groups (thought of as a trivial double groupoid) and (3)~``compact singular Lie groups''. Our strategy for all three of these cases will be to take a~fairly computation focused approach that explores the structure of the two convolution products.

Let us briefly summarize the main results for the last case, since it is the most general. Suppose $ \CG = K \ltimes G $ where $ K $ is a (possibly non-closed) discrete normal subgroup of a compact, connected, Lie group $ G $. We let $ \circ $ denote the composition operation on $ \CG $ from the action groupoid structure and let $ \bullet $ denote the group product. Such a $ \CG $ is what we call a \emph{compact singular Lie group}. Now let \smash{$ \starcirc $} and \smash{$ \stardot $} be the associated convolution operations on $ C_c^\infty(\CG) $.

Given a smooth function $ u \in C^\infty(G) $ and $ \kappa \in K $ write $ u^\kappa \in C^\infty_c(K \times G) $ to denote the function
\[
u^\kappa (x,y) :=
\begin{cases}
  u(y)  & \text{if } x = \kappa, \\
  0  & \text{otherwise}.
\end{cases}
\]
Now suppose $ \pi $ is an $ n $-dimensional representation of $ G $. By choosing an ordered basis for the underlying vector space and taking $ 0 \le i,j \le n $, we can recover a function $ \pi_{ij} \in C^\infty(G, \BBC) $ by taking a matrix coefficient of the representation.

We now state the main theorem.

\begin{Theorem}
Let $ \pi $ be an $ n$-dimensional representation of $ G $. Suppose that we have $u,v \in C^\infty(G)$, $ \kappa_1, \kappa_2, \lambda_1, \lambda_2 \in K $ and
$ 1 \le i, k \le n$.
Then we have that
  \begin{equation}\label{eqn:main.1}
   \sum_{j = 1}^{n} \big( u^{\kappa_1} \starcirc \pi_{ij}^{\lambda_1} \big) \stardot \big( v^{\kappa_2} \starcirc \pi_{jk}^{\lambda_2} \big) =   \sum_{j = 1}^{n} \big( u^{\kappa_1} \stardot  v^{\kappa_2} \big) \starcirc \big(\pi_{ij}^{\lambda_1} \stardot \pi_{jk}^{\lambda_2} \big).
  \end{equation}
\end{Theorem}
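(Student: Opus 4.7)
The plan is a direct computation whose only non-bookkeeping ingredients are (i) the centrality of $K$ in $G$ and (ii) the defining identity of matrix coefficients. Because $G$ is connected and $K\subset G$ is discrete and normal, conjugation by any $g\in G$ must fix $K$ pointwise, so $K$ is central and the group product $\bullet$ on $K\times G$ is simply the direct product. Unwinding the definitions with counting measure on the discrete source fibres for $\starcirc$ and normalized Haar measure on $G$ for the group convolution $*_G$, one quickly verifies
\[
  a^{\alpha}\stardot b^{\beta}=(a*_G b)^{\alpha\beta},\qquad u^{\kappa}\starcirc \pi_{ij}^{\lambda}=\bigl(\ell_{\lambda}u\cdot\pi_{ij}\bigr)^{\kappa\lambda},
\]
where $\ell_{\lambda}u(g):=u(\lambda g)$ and $\cdot$ is pointwise multiplication.

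Substituting these formulas into \eqref{eqn:main.1}, both sides are supported on the single $K$-index $\kappa_1\kappa_2\lambda_1\lambda_2$; the left-hand index $\kappa_1\lambda_1\kappa_2\lambda_2$ agrees by centrality of $K$. The crux is then the representation identity $\sum_j\pi_{ij}(a)\pi_{jk}(b)=\pi_{ik}(ab)$. On the left, after interchanging the sum with the integral defining $*_G$, this identity collapses the $\pi$-factors and pulls $\pi_{ik}(g)$ outside, leaving
\[
  \pi_{ik}(g)\int_G u(\lambda_1 h)\,v(\lambda_2 h^{-1}g)\,dh.
\]
On the right, the same identity together with unit Haar mass of $G$ gives $\sum_j(\pi_{ij}*_G\pi_{jk})(g)=\pi_{ik}(g)$, so the right side reduces to $\pi_{ik}(g)\cdot\ell_{\lambda_1\lambda_2}(u*_G v)(g)$.

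It remains to match these. Using centrality of $K$ I would commute $\lambda_2$ past $h^{-1}$ inside the integral and then make the left-invariant change of variables $h\mapsto\lambda_1^{-1}h$, turning $\int_G u(\lambda_1 h)v(h^{-1}\lambda_2 g)\,dh$ into $(u*_G v)(\lambda_1\lambda_2 g)$, which is exactly what the right-hand side requires. I expect no substantive obstacle beyond organization: the only conceptual inputs are the centrality of $K$ (which is automatic in our setting) and the representation property of the matrix coefficients, and both are invoked in a routine way. It is worth noting, however, that the sum over $j$ is essential---without it the representation identity cannot be applied, and the $\pi$-factors do not collapse.
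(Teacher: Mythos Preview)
Your proposal is correct and follows essentially the same route as the paper: the paper likewise reduces both sides to functions supported at the single $K$-level $\kappa_1\lambda_1\kappa_2\lambda_2=\kappa_1\kappa_2\lambda_1\lambda_2$ via the formulas $a^{\alpha}\stardot b^{\beta}=(a*b)^{\alpha\beta}$ and $u^{\kappa}\starcirc w^{\lambda}=(L_{\lambda}u\cdot w)^{\kappa\lambda}$, then uses $\sum_j\pi_{ij}(h)\pi_{jk}(h^{-1}g)=\pi_{ik}(g)$ (packaged there as a separate proposition and its corollary $\sum_j\pi_{ij}*\pi_{jk}=\pi_{ik}$) together with $L_{\lambda_1\lambda_2}(u*v)=L_{\lambda_1}u*L_{\lambda_2}v$, which is exactly your final centrality/change-of-variables step.
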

If the above theorem did not have sums, it would describe a set of 4-tuples where a compatibility condition of the form \eqref{eqn:intro.compat} holds. The stronger version of equation~\eqref{eqn:main.1} (i.e., without the sums) is false in general. However, versions of it hold for some of the more tame examples such as the case where $ G $ is a torus.

An interpretation of the above formula is to say that the failure of the compatibility law for vanishes ``on average'' for certain expressions involving matrix coefficients.

\subsection{Additional remarks}
To the best of our knowledge, there is not much by way of existing literature on convolution algebras for double groupoids. Sections \ref{sec2}--\ref{sec5} are largely dedicated to ensuring that this article can be a relatively self-contained introduction to the topic. Notably, in Section \ref{sec5} we introduce the notion of a double Haar system which, to our knowledge, is new. A definition of a $ 2$-Haar system appears in Amini~\cite{Amini} but there does not seem to be any compatibility condition and so it seems to be too weak to be useful for our purposes.

Sections \ref{Section:noncommutative.torus}--\ref{Section:compact.singular.Lie.groups} are dedicated to using some representation theory of compact Lie groups to compute some interesting examples. These calculations we do here lean on the fact that the representation theory of compact Lie groups is closely related to the convolution operation. This allows us to take advantage of some useful properties of matrix coefficients. In the future it could be interesting to better understand the case of non-compact Lie groups where the representation theory can be significantly more subtle.

The compatibility law (equation~\eqref{eqn:main.1}) that we have proved relies on specific facts about the structure of the convolution algebras involved. We suspect that there is a more general form which might lead to a notion of ``compatible algebras''. However, at this time, the correct notion of compatible algebras is not completely clear.

There are also alternative approaches one could take to investigate this topic. Rather than considering two algebra structures on the same set, one could instead attempt to study these structures in the form of bialgebras or Hopf algebras. Some work in this direction does exist (see, for example, Hopfish algebras~\cite{Hopfish2,Hopfish}).

\section{Groupoids background}\label{sec2}
In this section, we will review some basic facts about and establish our notation for Lie groupoids and related algebraic structures.
\subsection{Categories}
The most general kind of algebraic object we will examine is that of a (small) category. Let us give a definition so that we can establish some of our notation conventions and terminology for categories. Our conventions will reflect the fact that we will be considering categories as algebraic objects in the vein of a group or a monoid.
\begin{Definition}\label{def:category}
  A \emph{$($small$)$ category} $ \CC = \CC_1 \grpd \CC_0 $ is a pair of sets $\CC_1 $ (the \emph{arrows}) and $\CC_0 $ (the \emph{objects}) together with a pair of functions
  $s \colon \CC_1 \to \CC_0$, $t \colon \CC_1 \to \CC_0$
  called the \emph{source} and \emph{target}, and another function:
  a function
  $u \colon \CC_0 \to \CC_1$, $x \mapsto 1_x$
  called the \emph{unit}. The set of \emph{composable arrows} is defined to be
  \[ \CC_1 \times_{s,t} \CC_1 := \{ (a,b) \in \CC_1 \mid s(a) = t(b) \} \]
  and we have a function
  $m \colon \CC_1 \times_{s,t} \CC_1 \to \CC_1$, $(a,b) \mapsto a \circ b$
  called \emph{multiplication} or \emph{composition}. We require that these functions satisfy the following axioms:
  \begin{itemize}\itemsep=0pt
    \item (Compatibility of source and target with multiplication):
    $\forall (a,b) \in \CC_1 \times_{s,t} \CC_1$, $ t( a \circ b) = t(a) $ and $ s(a \circ b ) = s(b)$.
    \item (Associativity):
    $\forall (a,b,c ) \in \CC \times_{s,t} \CC \times_{s,t} \CC$, $ (a \circ b) \circ c = a \circ (b \circ c)$.
    \item (Compatibility of the unit with source and target):
    $\forall x \in \CC_0$, $ s(1_x) = t(1_x)$.
    \item (Left and right unit laws):
   $\forall a \in \CC_1$, $ 1_{t(a)} \circ a = a  = a \circ 1_{s(a)}$.
  \end{itemize}
   If the set of objects $ \CC_0 $ is a singleton, then we say that $ \CC $ is a \emph{monoid}. If we remove the unit map and its associated axioms the resulting structure is called a \emph{semi-category} and a semi-category with only one object is called a \emph{semi-monoid} (see Definition~\ref{def:category}).
\end{Definition}
In general, we will use the notation $ A \grpd B $ to indicate that $ A$ is the arrows of a category with objects $ B $.
Let us now consider a few basic examples of categories that will play a role in our discussion.
\begin{Example}
   Let $ \CC_0 = \{ * \} $ and let $ \CC_1 = \BBN $. If we take composition to be addition of natural numbers, then this constitutes a category. Since it only has one object, it is an example of a~monoid.
\end{Example}
\begin{Example}[complex Euclidean representations]
  Let $ G $ be a Lie group and let
  \[ \CC_0 = \{ \rho \colon G \to \GL(n,\BBC) \mid n \in \BBN,\, \rho \text{ representation} \} \]
   be the set of all representations of $ G $ on complex Euclidean spaces. Given an object $ \rho \in \CC_0 $ let us write $ V_\rho $ to denote the underlying complex Euclidean space. Now let
  \[ \CC_1 := \{ (\rho_2, L, \rho_1) \mid \rho_1, \rho_2 \in \CC_0, \, L \colon V_{\rho_1} \to V_{\rho_2} \text{ linear intertwining operator} \}. \]
  Here, linear intertwining operator means that $ L $ is a linear map and for all $ g \in G $, $ v \in V_{\rho_1} $ we have that
  $ L(\rho_1(g) v) = \rho_2(g) L(v)$.
  It is not too difficult to see that this is an example of a~category if we take the multiplication operation to be
 $(\rho_3, L_2, \rho_2) \circ (\rho_2, L_1, \rho_1 ) = (\rho_3, L_2 L_1, \rho_1 )$.
\end{Example}

\subsection{Groupoids}
Briefly, a groupoid is a category $ \CG $ where every arrow is invertible. In our notation conventions, we will typically use $ \CC $ to denote a category and $ \CG $ to denote a groupoid.
\begin{Definition}
A \emph{groupoid} $ \CG $ is a category equipped with a function
$i \colon \CG_1 \to \CG_1$, $g \mapsto g\inv$
which satisfies the following properties:
\begin{itemize}\itemsep=0pt
  \item (Compatibility of inverse with source and target):
  $\forall g \in \CG_1$, $ s\big(g\inv\big) = t(g)$ and \linebreak $t\big(g\inv\big) = s(g)$.
  \item (Inverse law):
  $\forall g \in \CG_1$, $ g\inv \circ g = u(s(g))$ and $g \circ g\inv = u(t(g))$.
\end{itemize}
\end{Definition}
\subsection{Examples of groups and groupoids}
In this section, we will lay out some of the most basic examples of groups and groupoids. Of particular importance to us will be the action groupoid. Action groupoids can be used to construct some of the most basic non-trivial examples of double structures.
\begin{Example}[groups]
  Suppose $ G $ is a group.
  Let $ \{ * \} $ be a set with a single point and take $ s \colon G \to \{ * \} $ and $ t \colon G \to \{ * \} $ to be the unique functions.
  Take $ u \colon \{ * \} \to G$ to be the constant map which sends $ * $ to the neutral element.
  If we take $ i $ and $ m$ to be the usual inverse and multiplication maps for the group, then we get the structure of a groupoid $ G \grpd \{ * \} $.
\end{Example}
\begin{Example}[group actions]
Suppose $ G $ is a group acting on a set $ X $.
Let $ e \in G $ denote the neutral element and denote the action by $ (g,x) \mapsto g \cdot x $.

We can construct a groupoid structure $G \times X \grpd X $.
The source and target maps are as follows~${s(g,x) = x}$, $t(g,x) = g \cdot x$.
The unit and inverse maps are $u(x) = (e,x)$, ${i(g,x) = \big(g\inv, g \cdot x \big)}$.
Finally, the multiplication map is $(g_1, g_2 \cdot x) \circ (g_2, x) = (g_1 \cdot g_2, x )$.
\end{Example}

\begin{Example}[equivalence relations]
  Suppose $ E \subset X \times X $ is an equivalence relation on a~set~$ X$.
  Then we can form a groupoid $ E \grpd X $.
   The source and target maps are $s(x,y) = y$, $t(x,y) = x$, the unit and inverse maps are $u(x) = (x,x)$, $i(x,y) = (y,x)$ and the multiplication map is~${(x,y) \circ (y,z) = (x,z)}$. In the case that $ E = X \times X $, then this is called the \emph{pair groupoid}.\looseness=1
\end{Example}
\subsection{Lie groupoids}
Lie groupoids are just groupoids equipped with smooth structure. The literature on Lie group\-oids is fairly rich and we will only cover a few of the most basic concepts. For a more thorough reference, we refer to Crainic and Fernandes~\cite{LectOnInt} or Mackenzie~\cite{MackenzieGeneralTheory}.
\begin{Definition}
  A \emph{Lie groupoid} is a groupoid $ \CG \grpd M $ where the sets $ \CG $ and $ M $ are equipped with second countable smooth manifold structures.
  We further require that the source and target maps are submersions and the unit, multiplication, and inverse maps are smooth and that $ M $ is Hausdorff.
\end{Definition}
Lie groupoids have some particular features that are worth pointing out. One of them is that for each point $ x \in M $ in the object manifold, the associated source fiber $ s\inv(x) \subset \CG$ is an embedded submanifold. Even though we do not assume that $ \CG $ is Hausdorff, such source fibers of $ \CG $ are automatically Hausdorff.

Many of the examples of groupoids we provided earlier can be made into examples of Lie groupoids. For example, if a Lie group act smoothly on a manifold, then the associated action groupoid is a Lie groupoid. Groupoids associated to equivalence relations are Lie groupoids as long as the equivalence relation $ E \subset M \times M $ is an immersed submanifold and it is transverse to the fibers of each of the projection maps $ \pr_1, \pr_2 \colon M \times M \to M $. Any countable groupoid $ \CG $ can be regarded as Lie groupoid under the discrete topology.

\section{Double structures}
For our purposes, a double structure occurs when a set is equipped with more than one way to multiply elements. There are a few different contexts where this can occur and we will look at a few interesting examples.

\subsection{Compatible operations}
Let us consider the most basic type of double structure.
\begin{Definition}
  Suppose $ S $ is a set. Let $ M_\bullet $ and $ M_\circ $ be subsets of $ S \times S $ and suppose we have two binary operations
$S \times S \supset M_{\bullet} \to S$, $(a,b) \mapsto a \bullet b$, $S \times S \supset M_{\circ} \to S$, $(a,b) \mapsto a \circ b$. We say that these binary operations are \emph{compatible} if they satisfy the following equation whenever both sides of the equation are well defined:
  \begin{equation*}\label{eqn:The.Intertwining.Law}
  ( a \circ b) \bullet ( c \circ d) = (a \bullet c) \circ (b \bullet d) \qquad \text{ (compatibility law)}.
\end{equation*}
\end{Definition}
As we mentioned in the introduction, the Eckmann--Hilton lemma tells us that if a pair of compatible binary operations are unital and globally defined, then they are commutative, associative, and equal.

\begin{Example}[trivial compatible pair]
  Suppose $ \circ $ is an associative and commutative binary operation. Then $ \circ $ with itself constitutes a compatible pair.
\end{Example}
\begin{Example}[matrix multiplication and tensor product]\label{example:mat.mult.tensor}
  Let
$ \Mat = \bigsqcup_{n,m \in \BBN } M_{n \times m}(\BBR)$
  be the set of all real matrices of arbitrary dimensions.

   Let $ \bullet $ be the binary operation arising from matrix multiplication. Note that this binary operation is not globally defined since it requires that the dimensions of the matrices line up appropriately. In fact, this is the composition operation in a category where $ \Mat $ is the arrows and the objects are natural numbers. The source and target maps are just the dimension maps.

  Now, let $ \otimes $ be the tensor product (i.e., the Kronecker product of matrices)
  \[
  A \in M_{n\times m}, \  B \in M_{p \times q}, \qquad (A \otimes B )_{(i \otimes x)( j \otimes y)} := A_{ij} B_{xy},
  \]
  where we define
  $i \otimes x := (i-1)p + x$, $ j \otimes y := (j-1)q + y$.

   The operation $ \otimes $ is globally defined for all matrices and it constitutes a compatible pair with~$ \bullet $. Since $ \bullet$ is not globally defined, the Eckmann--Hilton lemma does not apply.
\end{Example}

\subsection{Double categories}
Before we can state the definition of a double category, let us make clear the definition of a~\emph{morphism of categories}, otherwise known as a functor.
\begin{Definition}
   Suppose $ \CC $ and $ \CD $ are categories. We will denote the structure maps the same way and leave the distinction implicit from the domains. A \emph{functor} $ \phi \colon \CC \to \CD $ consists of a pair of functions
  $\phi_1 \colon \CC_1 \to \CD_1$, $\phi_0 \colon \CC_0 \to \CD_0$
  with the following properties:
  \begin{itemize}\itemsep=0pt
    \item (Compatibility with source and target):
    $s \circ \phi_1 = \phi_0 \circ s$, $t \circ \phi_1 = \phi_0 \circ t$.
    \item (Compatibility with multiplication):
    $\forall (a,b) \in \CC_1 \times_{s,t} \CC_1 \to \CC_1$, $ \phi_1(a \bullet b) = \phi_1(a) \bullet \phi_1(b) $.
  \end{itemize}
  A functor can be visualized as a square
  \[\begin{tikzcd}
    \CC_1 \darrow \arrow[r, "\phi_1"] & \CD_1 \darrow \\
    \CC_0 \arrow[r, "\phi_0"] &\, \CD_0.
  \end{tikzcd}\]
\end{Definition}
A double category is a category internal to the category of categories. This is not the most useful definition for those who do not already know what such structures are.
\begin{Definition}
  A \emph{double category} $ \CD $ consists of four sets $\CC$, $\CK$, $\CH$ and $M$ together with four category structures which we visually arrange into a square
 \[\begin{tikzcd}
\CC \darrow \rarrow & \CK \darrow \\
\CH \rarrow &\, M.
\end{tikzcd}\]
The various structures are assumed to be \emph{compatible} in the sense that the source and target maps of $ \CC \grpd \CH $ and $ \CH \grpd M $ constitute functors from $ \CC \grpd \CK$ to $ \CH \grpd M $
  \[
  \begin{tikzcd}
    \CC \darrow \arrow[r, "s_1"] & \CK \darrow \\
    \CH \arrow[r, "s_0"] &\, M,
  \end{tikzcd}
  \qquad
  \begin{tikzcd}
    \CC \darrow \arrow[r, "t_1"] & \CK \darrow \\
    \CH \arrow[r, "t_0"] &\, M.
  \end{tikzcd}
  \]
The two multiplication operations on $ \CC $ must also constitute a pair of compatible binary operations. This is the algebraic rule encoding that the composition operation also constitutes a~homomorphism. If $ \circ $ is the binary operation for $ \CC \grpd \CK $ and $ \bullet $ is the binary operation for $ \CC \grpd \CH $, then for any four elements $ a,b,c,d \in \CC $ such that the following operations
$a \circ b$, $c \circ d$, $a \bullet c$, $b \bullet d$ are well defined,
we must have that the following equation is also well defined and holds
\begin{equation}\label{eqn:intertwininglaw}
(a \circ b) \bullet (c \circ d) = (a \bullet c) \circ (b \bullet d).
\end{equation}
\end{Definition}
Generally speaking, there are quite a large number of structure maps associated to a double category. We will generally try to avoid using them explicitly. Most of the time we will be concerned with the compatible pair of binary operations and we will often use the phrase ``whenever it is well defined'' as a shorthand for the appropriate composition rules.

However, in situations where we must make reference to these structures, we will observe the following convention: We call the category structures on $ \CC \grpd \CH $ and $ \CK \grpd M $ the \emph{horizontal categories} and the category structure maps will be denoted
$s^H_{i}$, $t^H_{i}$, $u^H_i$, $\bullet$, $i = 0,1$
where $ i = 1$ corresponds to $ \CC \grpd \CH $ and $ i= 0 $ corresponds to $ \CK \grpd M$. We will abuse notation and use $ \bullet $ to denote the horizontal composition for both groupoids.

The category structures on $ \CC \grpd \CK $ and $ \CH \grpd M $ are the \emph{vertical categories} and the category structure maps will be denoted
$s^V_{i}$, $t^V_{i}$, $u^V_i$, $\circ$, $i = 0,1$,
where $ i = 1$ corresponds to $ \CC \grpd \CK $ and~${i= 0}$ corresponds to $ \CH \grpd M$. We will use $ \circ $ to denote vertical composition for both groupoids.

It is often helpful to visualize elements $a \in \CC $ as a square
\[
\begin{tikzcd}
   x \arrow[rr, "s_1^V(a) \in \CK"] \arrow[dd, "s_1^H(a) \in \CH", swap] & & y \arrow[dd, "t_1^H(a) \in \CH"] \\
   & a & \\
   z\arrow[rr, "t_1^V(a) \in \CK", swap] & &\, w,
\end{tikzcd}
\]
where $ x,y,z,w \in M $ are the sources and targets of their respective arrows. The elements of $ \CC $ can be thought of as 2-cells.

The vertical and horizontal composition operations can be visualized by vertically or horizontally juxtaposing such squares. In this way, the compatibility law looks like
\[
\left( \begin{tikzcd}[column sep = tiny, row sep = tiny]
a \\
\circ \\
b
\end{tikzcd} \right)
\bullet
\left(
\begin{tikzcd}[column sep = tiny, row sep = tiny]
c \\
\circ \\
d
\end{tikzcd}\right)
=
\begin{tikzcd} [column sep = tiny, row sep = 0.15em]
\big( a &\bullet & c \big) \\
& \circ  & \\
\big( b & \bullet & d \big)
\end{tikzcd}.
\]

\subsection{Double groupoids}
Double groupoids are a type of double category that is of particular interest to us.
\begin{Definition}
A \emph{double groupoid} is a double category
 \[\begin{tikzcd}
\CG \darrow \rarrow & \CK \darrow \\
\CH \rarrow & \,M,
\end{tikzcd}
\]
where all of the category structures are in fact groupoids.
\end{Definition}
Let us consider a few examples.
\begin{Example}[groups]
  Let $ G $ be a group. Consider the following double groupoid:
\[\begin{tikzcd}
G \darrow \rarrow & G \darrow \\
\{*\} \rarrow & \,\{*\}.
\end{tikzcd}
\]
For the top and bottom groupoid structures, we take the trivial groupoid structure in which every arrow is a unit. For the left and right groupoid structures, we take the usual group operations. It is straightforward to check that these structures are compatible.
\end{Example}
\begin{Example}[strict 2-groupoids]
A \emph{strict $2$-groupoid} is a kind of double groupoid, where the bottom groupoid structure is trivial
\[
\begin{tikzcd}
  \CG_2 \darrow \rarrow & \CG_1 \darrow \\
\CG_0 \rarrow & \,\CG_0.
\end{tikzcd}
\]
Under the typical conventions for 2-categories, the set $ \CG_0 $ is the \emph{objects}, $ \CG_1$ is the \emph{arrows} and $ \CG_2$ is the \emph{$2$-arrows}.
\end{Example}
\begin{Example}[strict 2-groups]
A \emph{strict $2$-group} is a 2-groupoid with a single object. Hence, it can be interpreted as a double groupoid of the form
\[
\begin{tikzcd}
  G_2 \darrow \rarrow & G_1 \darrow \\
\{ * \} \rarrow &\, \{ * \}.
\end{tikzcd}
\]

\end{Example}
Lie double groupoids arise when we impose smoothness conditions on the structure maps. However, there is one slight caveat in that we require the so called ``double target'' map to be a submersion. In order to explain this, let us first consider the set
\[ \CG^\lrcorner := \big\{ (k,h) \in \CK \times \CH \colon t_0^H(k) = t_0^V(h) \big\}. \]
This set can be visualized as the set of bottom right ``corners'' of a double groupoid cell
\[
\CG^\lrcorner = \left \{ \begin{tikzcd}
 & x \arrow[d, "h"] \\
y \arrow[r, "k"] & z
\end{tikzcd} \right\}.
\]
Associated to this set is the double target map
$
t^D \colon \CG \to \CG^\lrcorner$, $a \mapsto \big(t_1^V(a), t_1^H(a)\big)$.
This operation maps an element of $ \CG $ to its bottom right corner
\[
\begin{tikzcd}
   x \arrow[rr, "k_1"] \arrow[dd, "h_1", swap] & & y \arrow[dd, "h_2"] \\
   & a & \\
   z\arrow[rr, "k_2", swap] & & w
\end{tikzcd} \quad
\mapsto \quad
\begin{tikzcd}
   & & y \arrow[dd, "h_2"]\\
   & & \\
   z \arrow[rr, "k_2"] & &\, w.
\end{tikzcd}
\]
\begin{Definition}\label{defn:lie.double.groupoid}
A \emph{Lie double groupoid} is a double groupoid
 \[\begin{tikzcd}
\CG \darrow \rarrow & \CK \darrow \\
\CH \rarrow &\, M,
\end{tikzcd}\]
where $ \CG$, $\CK$, $\CH$, $M $ are all equipped with smooth manifold structures which make the associated groupoid structures into Lie groupoids. We additionally require that the double target map~${t^D \colon \CG \to \CG^\lrcorner}$
is a surjective submersion.
\end{Definition}
The requirement that the double target map is a surjective submersion can be thought of as a kind of smooth filling condition.
An important consequence of this map being a submersion is that the set of vertically composable arrows in $ \CG $ is a \emph{smooth} Lie subgroupoid of the horizontal groupoid structure on $ \CG \times \CG $ (and vice versa). It also ensures that the space of composable squares $ \CG^\boxplus $, which is the domain of the compatibility law, is a smooth manifold
\begin{equation}\label{equation:groupoid.box}
\CG^\boxplus := \{ (a,b,c,d) \in \CG \mid a \circ b, \ c \circ d, \ a \bullet c, \ b \bullet d \ \text{are well defined} \}. \end{equation}

\section{Algebras associated to categories}

\subsection{Category algebras}
The simplest sort of algebra that we can attach to a category is the so-called category algebra.
\begin{Definition}\label{def:category.algebra}
  Let $ \CC $ be a category. The \emph{category algebra} of $ \CC $ is the vector space $ \BBC \CC $ generated freely by elements of $ \CC $.

   Associated to any element $ a \in \CC_1 $ let $ \delta_a \in \BBC \CC $ denote the associated basis vector. The algebra structure on $ \BBC \CC $ is defined in terms of basis elements
  \[ \forall a, b \in \CC_1, \qquad \delta_a * \delta_b := \begin{cases}
    \delta_{a \bullet b}, & s(a) = t(b), \\
    0, & s(a) \neq t(b).
  \end{cases} \]
  If $ \CC $ is a groupoid, this is called the \emph{groupoid algebra}. In the case where $ \CC $ is a group, then this is the classical group algebra.
\end{Definition}
One of the main distinctions between a category and its associated category algebra is that the product operation is globally defined.
\begin{Example}
   Suppose $ S $ is a finite set with $ n $ elements and $ \CG = S \times S \grpd S $ is the pair groupoid. Then the group algebra of $ \CG $ is isomorphic to $ M_{n \times n}(\BBC) $ the algebra of $ n $ by $ n $ matrices.

   To see why, let us chose a way to index $ S $ by natural numbers: $ S = \{ s_i \}_{1 \le i \le n} $. Then consider the isomorphism given by the following map of basis elements $\forall\ 0 \le i,j \le n$, $\delta_{(s_i,s_j)} \mapsto E_{ij}$
   where $ E_{ij}$ is the elementary $ n \times n$ matrix with a single $ 1 $ in the $ (i,j)$th entry. We leave it to the reader to verify that this is indeed an isomorphism of algebras.
\end{Example}

\subsection{Convolution algebras}
The convolution algebra of a topological groupoid provides us with a simultaneous generalization of the notion of a group algebra and the algebra of continuous functions on a space. A~more thorough reference on the basics of Haar systems and convolution algebras in groupoids can be found in Williams~\cite{DanaWilliamsBook}.
We will stick to the setting of Lie groupoids but much of the following discussion is well defined for many (nice enough) topological groupoids by replacing ``smooth'' with ``continuous''. Throughout this section, given a manifold $ M $, we will use the notation~$ C_c^\infty(M)$ to denote the set of complex valued, compactly supported, smooth functions on $ M $.

Before we can define the convolution algebra, we require the groupoid analogue of a Haar measure.
\begin{Definition}
Suppose $ \CG_1 \grpd \CG_0$ is a Lie groupoid. A \emph{smooth $($left$)$ Haar system} on $ \CG_1 \grpd \CG_0 $ is collection $ \{ \mu_x \}_{x \in \CG_0 } $, where for each $ x \in \CG_0 $, $ \mu_x$ is a Radon measure on $ t\inv(x) $. We require that the family $ \{ \mu_x \}_{x \in \CG_0} $ satisfies the following properties:
\begin{enumerate}\itemsep=0pt
  \item[(1)] (Smoothness): For each function $ u \in C_c^\infty(\CG_1) $, the associated function
  \[ \CG_0 \to \BBR, \qquad x \mapsto \int_{t\inv(x)} u|_{t\inv(x)} {\rm d} \mu_x  \]
  is smooth.
  \item[(2)] (Left invariant): For each $ g \in \CG_1 $ with $ s(g) = x$ and $ t(g) = y $, the function
  \[ L_g \colon\ t\inv(x) \to t\inv(y), \qquad h \mapsto g \circ h \]
  is measure preserving.
\end{enumerate}
\end{Definition}
Given such a Haar system, we will write $ \int {\rm d} \mu \colon C^\infty_c(\CG_1) \to C^\infty_c(\CG_0) $ to denote the fiberwise integration map that arises as a consequence of property (1).
\begin{Example}
Suppose $ G$ is a Lie group. If we consider $ G $ to be a groupoid with object space~$ \{ * \} $, then a Haar system on $ G \grpd \{ * \} $ is the same thing as a Haar measure on $ G $.
\end{Example}
\begin{Example}
   Suppose $ \CG_1 \grpd \CG_0 $ is an \'etale Lie groupoid. In other words, the source and target maps are \'etale maps. Then the $ t $-fibers of $ \CG_1 $ are zero-dimensional and any scalar multiple of the counting measure on the $ t $-fibers constitutes a Haar system.
\end{Example}

\begin{Definition}
  Suppose $ u, v \in C_c^\infty(\CG_1) $. Then the \emph{convolution} of $ u $ with $ v $ is the function
  \[
  u * v \colon\ \CG_1 \to \BBC, \qquad u*v(g) := \int_{t\inv(t(g))} u(h) v\big(h\inv \circ g\big) {\rm d} \mu_{t(g)}(h).
  \]
  This defines a (possibly non-unital) associative algebra structure
  $C_c^\infty(\CG_1 ) \otimes C_c^\infty(\CG_1) \to C_c^\infty(\CG_1)$.
\end{Definition}
There is an alternative construction of the convolution operation that provides us with a~bit more insight than the usual formula. To start, we first observe that given $ g \in \CG_1 $ we can canonically identify $ m\inv(g)$ with $ t\inv(t(g)) $ by taking the map
$t\inv(t(g)) \to m\inv(g)$, ${h \mapsto \big(h, h\inv \circ g \big)}$.
This provides us with a smooth family of measures $ \mu_g $ on $ m\inv(g) $ for each $ g \in \CG_1 $. From this point of view, the convolution of $ u $ with $ v $ is obtained by viewing $ u \otimes v $ as a function on $ \CG_1 \times \CG_1 $, restricting it to the set of composable arrows, and integrating along the fibers of $ m $
\[\begin{tikzcd}
C_c^\infty(\CG_1 ) \otimes C_c^\infty(\CG_1) \arrow[r]& C_c^\infty(\CG_1 \times \CG_1) \arrow[r, "\text{restrict}"] & C_c^\infty( \CG_1 \times_{s,t} \CG_1) \arrow[r, "{\int {\rm d} \mu_g}"] & C_c^\infty(\CG_1).
\end{tikzcd}
\]
\begin{Example}\label{example:category.algebra.convolution}
   Suppose $ \CG_1 \grpd \CG_0 $ has the discrete topology. Take the Haar system on $ \CG_1 \grpd \CG_0 $ which is given by the counting measure. The set $ C_c^\infty(\CG_1)$ can be canonically identified with $ \BBC \CG_1 $ and this constitutes an isomorphism of the convolution algebra with the groupoid algebra.
\end{Example}
\begin{Example}
   Consider $ \BBR \grpd \{ * \} $ where we take the group structure on $ \BBR $ by addition. We can equip $ \BBR $ with the standard measure. Then the convolution of a function $ u $ with a function $ v $ is the classical convolution
  $u * v (x) = \int_{\BBR} u(y) v(x-y) {\rm d}y $.
\end{Example}

\begin{Example}
   Suppose $ K $ is a discrete group acting smoothly on a manifold $ M $. The action groupoid $ K \ltimes M \grpd M $ is an \'etale groupoid. Indeed, the source and target fibers of $ K \ltimes M $ can be identified with $ K $. Let us take the counting measure as the Haar system on $ K \ltimes M \grpd M $. Since our measure is discrete, integration over the target fibers is given by a sum.

  In this case, convolution is given by the formula
  \[ u * v (k,p) := \sum_{h \in K } u\big(h, \big(h\inv k\big) \cdot p \big) v\big(h\inv k, p\big).
   \]
  This sum is finite so long as $ u $ and $ v $ are compactly supported.
\end{Example}

\section{Convolution algebras of double groupoids}\label{sec5}
With the background out of the way, we will now introduce our main context. Throughout this section, we will be considering a double Lie groupoid of the form
 \begin{equation*}\label{eqn:haar.system.double.groupoid}
  \begin{tikzcd}
\CG \darrow \rarrow & \CK \darrow \\
\CH \rarrow &\, M.
\end{tikzcd}
\end{equation*}
As before, let us denote by $ \circ $ the groupoid product for $ \CG \grpd \CK $ and $ \CH \grpd M $ and use $ \bullet $ to denote the groupoid product for $ \CG \grpd \CH $ and $ \CK \grpd M $.
\subsection{Double target map}
Let us recall the set of bottom right corners
$\CG^\lrcorner := \big\{ (k,h) \in \CK \times \CH \mid t^H_0(k) = t^V_0(h) \big\}$
and the double target map
$t^D \colon \CG \to \CG^\lrcorner$, $a \mapsto \big(t_1^V(a), t_1^H(a)\big)$.
Recall that in the definition of a~double Lie groupoid (see Definition~\ref{defn:lie.double.groupoid}) we require that $ t^D $ is a submersion.

There are two natural actions of $ \CG $ on $ \CG^\lrcorner$. Given $ (k,h) \in \CG^\lrcorner $ and $ a \in \CG $ such that $ s^V_1(a) = k $ then
$a \circ (k,h) := \big( t^V_1(a), t^H_1(a) \circ h\big)$.
Similarly, given $ a \in \CG $ such that $ s^H_1(a) = h $ then
$a \bullet (k,h) := \big( t^V_1(a) \bullet k, t^H_1(a)\big)$.
Our next lemma observes that this action makes $ t^D $ equivariant with respect to the natural vertical and horizontal translation maps.

\begin{Lemma}
  Suppose $ a, b \in \CG $ are such that $ s^V_1(a) = t^V_1(b) $, then
  $t^D(a \circ b) = a \circ t^D(b) $.
  On the other hand, if $ s^H_1(a) = t^H_1(b)$, then
  $t^D(a \bullet b) = a \bullet t^D(b)$.
\end{Lemma}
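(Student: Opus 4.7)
The proof is essentially an unpacking of definitions, so the plan is to carefully organize the bookkeeping between the vertical and horizontal structure maps and invoke the functoriality conditions built into the definition of a double category.

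First I would rewrite $ t^D(a \circ b) $ using its definition as a pair $ \big( t_1^V(a \circ b), t_1^H(a \circ b) \big) $ and handle each coordinate separately. For the first coordinate, the only relevant fact is that $ \circ $ is the composition of the vertical groupoid $ \CG \grpd \CK $, so the category axiom giving compatibility of target with composition yields $ t_1^V(a \circ b) = t_1^V(a) $ (using the hypothesis $ s_1^V(a) = t_1^V(b) $ to ensure the composite is defined). For the second coordinate, the key point is that in any double category, the horizontal target map $ t_1^H \colon \CG \to \CH $, paired with $ t_0^H \colon \CK \to M $, forms a functor from the vertical groupoid $ \CG \grpd \CK $ to $ \CH \grpd M $. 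Applying functoriality to the composable pair $ (a,b) $ therefore gives
\[
t_1^H(a \circ b) = t_1^H(a) \circ t_1^H(b),
\]
where the composition on the right is the one in $ \CH \grpd M $.

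Next I would compare this with the right-hand side. By the hypothesis $ s_1^V(a) = t_1^V(b) $, the action of $ a $ on the corner $ t^D(b) = \big( t_1^V(b), t_1^H(b) \big) \in \CG^\lrcorner $ is well defined, and by the defining formula
\[
a \circ t^D(b) = \big( t_1^V(a), t_1^H(a) \circ t_1^H(b) \big).
\]
Matching coordinates with the expression obtained above proves the first identity. The second identity, about $ \bullet $, is proved by the symmetric argument: the horizontal composition axiom of $ \CK \grpd M $ (lifted to $ \CG \grpd \CH $) gives $ t_1^H(a \bullet b) = t_1^H(a) $, while functoriality of the vertical target map $ t_1^V \colon \CG \to \CK $ with respect to the horizontal groupoid yields $ t_1^V(a \bullet b) = t_1^V(a) \bullet t_1^V(b) $, which matches the definition of the $ \bullet $-action of $ a $ on $ \CG^\lrcorner $.

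There is no real obstacle here; the only mildly subtle point is keeping straight which of the four source and target maps is a structural map of which of the two category structures, and noticing that functoriality of the \emph{horizontal} source/target maps is precisely what is needed to turn the vertical composition inside $ t^D $ into a vertical composition outside. I would therefore state the proof compactly as two symmetric applications of the functoriality axiom of a double category, preceded by a one-line reminder that well-definedness of each composite follows from the hypothesis together with the compatibility of source and target with composition.
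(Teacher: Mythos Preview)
Your proposal is correct and is precisely the straightforward calculation the paper has in mind; the paper itself omits the proof entirely, stating only that it is ``a straightforward calculation, which we leave to the reader.'' Your unpacking via the category target axiom for one coordinate and the functoriality of the transverse target map for the other is exactly the intended argument.
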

The proof is a straightforward calculation, which we leave to the reader.
\begin{Corollary}
  Given $ a \in \CG $ and $ (k,h) \in \CG^\lrcorner $, then if $ s^V_1(a) = k $, the following map is a~diffeomorphism
  \[ L^V_a \colon \ \big(t^D\big)\inv(k,h) \to \big(t^D\big)\inv( a \circ (k,h)), \qquad b \mapsto a \circ b. \]
  Similarly, if we instead have that $ s^H_1(a) = h $, then
  \[ L^H_a \colon \ \big(t^D\big)\inv(k,h) \to \big(t^D\big)\inv( a \bullet (k,h)), \qquad b \mapsto a \bullet b \]
  is a diffeomorphism.
\end{Corollary}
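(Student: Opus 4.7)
The plan is to produce a smooth two-sided inverse to $L_a^V$, namely $L_{a\inv}^V$, where $a\inv$ denotes the inverse of $a$ in the vertical groupoid $\CG \grpd \CK$. First I would check that $L_a^V$ is well defined: for $b \in (t^D)\inv(k,h)$ the first coordinate of $t^D(b)$ is $t_1^V(b) = k = s_1^V(a)$, so $a \circ b$ is vertically composable, and the preceding Lemma immediately gives $t^D(a \circ b) = a \circ t^D(b) = a \circ (k,h)$. Smoothness is automatic, since $L_a^V$ is the restriction of the smooth vertical multiplication $\CG \times_{s_1^V, t_1^V} \CG \to \CG$ to $(t^D)\inv(k,h)$, which is a submanifold because $t^D$ is a surjective submersion by Definition~\ref{defn:lie.double.groupoid}.

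Next I would run the same argument for $L_{a\inv}^V$ to obtain a smooth map from $(t^D)\inv(a \circ (k,h))$ to $(t^D)\inv\big(a\inv \circ (a \circ (k,h))\big)$, and then show that this target fiber is in fact $(t^D)\inv(k,h)$. This reduces to the identity of actions on corners $a\inv \circ (a \circ (k,h)) = (k,h)$, which I would verify directly: the functoriality of $t_1^H \colon \CG \grpd \CK \to \CH \grpd M$ with respect to $\circ$, together with associativity of the action, rewrites the left-hand side as $u_1^V(k) \circ (k,h) = \big(k,\, u_0^V(t_0^H(k)) \circ h\big)$, and the corner condition $t_0^H(k) = t_0^V(h)$ together with the left unit law in $\CH \grpd M$ collapses this to $(k,h)$.

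The two composition identities $L_{a\inv}^V \circ L_a^V = \mathrm{id}$ and $L_a^V \circ L_{a\inv}^V = \mathrm{id}$ then follow from associativity of $\circ$ and the vertical inverse and unit axioms in $\CG \grpd \CK$, using the observation that $t_1^V(b) = k$ on the source fiber and $t_1^V(c) = t_1^V(a)$ on the target fiber. The horizontal statement is the evident analogue, obtained by swapping $\circ$ with $\bullet$ and interchanging the roles of horizontal and vertical structure maps throughout. There is no substantive mathematical obstacle: the only difficulty is notational bookkeeping, namely keeping track of which of the four source and target maps is in play and which of the two inverses is meant at each step. Once this is disentangled, the result is an immediate consequence of the preceding equivariance Lemma and the groupoid axioms.
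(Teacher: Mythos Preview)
Your proposal is correct and is precisely the natural argument; the paper in fact gives no explicit proof of this Corollary at all, treating it as an immediate consequence of the preceding equivariance Lemma. Your write-up simply fleshes out what the paper leaves to the reader, and there is nothing to add or correct.
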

\subsection{Double Haar systems}
We can now state our definition of a double Haar system.

\begin{Definition}\label{defn:double.haar.system}
A \emph{double Haar system} on $ \CG $ consists of the following data:
\begin{itemize}\itemsep=0pt
  \item For each $ (k,h) \in \CG^\lrcorner $ a Radon measure $ \mu_{(k,h)}^D $ on the fiber $ \big(t^D\big)\inv(k,h) $.
  \item A pair of Haar systems: $ \big\{\mu_x^\CK\big\}_{x \in M} $ and $\big\{\mu_x^\CH\big\}_{x \in M } $ on $ \CK \grpd M $ and $ \CH \grpd M $, respectively.
\end{itemize}
We require this data to satisfy the following properties:
\begin{enumerate}\itemsep=0pt
  \item[(1)] (Smooth) The family of measures varies smoothly. In other words, for all $ u \in C^\infty_c(\CG) $ the function
  \[ (k,h) \mapsto \int_{(t^D)\inv(k,h)} u(a) {\rm d} \mu_{(k,h)}^D (a)\]
  is smooth.
  \item[(2)] (Doubly invariant) For all $ a \in \CG$ and $ (k,h) \in \CK $, we have that if $ s^V_1(a) = k $, then
  \[ L^V_a \colon\ \big(t^D\big)\inv(k,h) \to \big(t^D\big)\inv(a \circ (k,h))\]
  is measure preserving. Similarly, if $ s^H_1(a) = h $, then
    \[ L^H_a \colon\ \big(t^D\big)\inv(k,h) \to \big(t^D\big)\inv(a \bullet (k,h))\]
    is measure preserving.
\end{enumerate}
\end{Definition}
Note that property 1 implies that fiberwise integration along $ t^D $ induces a linear map
$\int {\rm d} \mu^D \colon\allowbreak C^\infty_c(\CG) \to C^\infty_c(\CG^\lrcorner)$.

Now let us see how to construct a pair of compatible Haar systems for each of the product structures on $ \CG $ out of a double Haar system. To this end, first let us observe that the set $ \CG^\lrcorner $ fits into a pullback diagram
  \[
  \begin{tikzcd}
    \CG^\lrcorner \arrow[r, "\pr_1"] \arrow[d, "\pr_2"] & \CK \arrow[d, "t^H_0"]\\
    \CH \arrow[r, "t^V_0"] &\, M.
  \end{tikzcd}
  \]
  Note that given any $ k \in \CK $ the fiber
  $(\pr_1)\inv(k) = \big\{ (k,h) \in \CK \times \CH \mid t^H_0(k) = t^V_0(h) \big\} \subset \CG^\lrcorner$
  is canonically diffeomorphic to
  \smash{$\big(t^V_0\big)\inv\big( t^H_0(k)\big) \subset \CH $}.
   Therefore, given a Haar system, \smash{$ \big\{ \mu_x^\CH \big\}_{x \in M} $} on $ \CH $ one obtains smooth family of measures \smash{$ \big\{ \overline{\mu}^\CH_{k}\big\}_{k \in \CK}$} along the fibers of $ \pr_1 $ and they induce a fiberwise integration map
  \smash{$\int {\rm d}\overline{\mu}^\CH \colon C^\infty_c(\CG^\lrcorner) \to C^\infty_c(\CK)$},
  where for $ u \in C^\infty_c(\CG^\lrcorner) $ the function $\bigl( \int {\rm d}\overline{\mu}^\CH \bigr) u $ is the map
  \[
   k  \mapsto  \int_{(t_0^V)\inv(t_0^H(k))} u(k,h) {\rm d} \mu^\CH_{t_0^H(k)} (h).
  \]
   Similarly, given a Haar system \smash{$ \big\{\mu_x^\CK\big\}_{x \in M }$} on $ \CK $ one obtains a natural family of measures \smash{$ \big\{\overline{\mu}^\CK_h\big\}_{h \in \CH }$} along the fibers of $ \pr_2 $ with a fiberwise integration map
  $\int {\rm d}\overline{\mu}^{\CK} \colon C^\infty_c(\CG^\lrcorner) \to C^\infty_c(\CH) $,
    where for $ u \in C^\infty_c(\CG^\lrcorner) $ the function $\big( \int {\rm d}\overline{\mu}^\CH \big) u $ is the map
  \[ h  \mapsto   \int_{(t_0^H)\inv(t_0^V(h))} u(k,h) {\rm d} \mu^\CK_{t_0^V(h)} (k).
  \]
  \begin{Lemma}\label{lemma:fiberwise.integral.lemma}
    The following diagram of fiberwise integrals commutes:
    \[
    \begin{tikzcd}
           C^\infty_c(\CG^\lrcorner) \arrow[d, "\int {\rm d}\overline{\mu}^\CK"] \arrow[r, "\int {\rm d}\overline{\mu}^\CH"] & C^\infty_c(\CK) \arrow[d, "\int {\rm d} \mu^\CK"] \\
      C^\infty_c(\CH) \arrow[r, "\int {\rm d} \mu^\CH"] & \,C^\infty_c(M).
    \end{tikzcd}
    \]
  \end{Lemma}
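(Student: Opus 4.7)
The plan is to reduce the commutativity of the square to an application of Fubini's theorem by unpacking both composites pointwise at $x \in M$. The key observation is that $\CG^\lrcorner$ admits a canonical map $\tau \colon \CG^\lrcorner \to M$ sending $(k,h) \mapsto t_0^H(k) = t_0^V(h)$, and the fiber $\tau\inv(x)$ is canonically diffeomorphic to the product $\big(t_0^H\big)\inv(x) \times \big(t_0^V\big)\inv(x)$. Both legs of the square eventually integrate $u$ along this fiber, just in different orders.

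First I would verify that both composites produce smooth compactly supported functions on $M$. The smoothness axiom built into the definition of a Haar system (together with the analogous smoothness for the induced families $\overline{\mu}^\CH$ and $\overline{\mu}^\CK$ along the projections $\pr_1, \pr_2$, which follows from the pullback description of $\CG^\lrcorner$) ensures that each individual fiberwise integration preserves smoothness and compact support, so both composites land in $C^\infty_c(M)$.

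Next, I would evaluate both composites at a point $x \in M$ and unpack the definitions of $\overline{\mu}^\CH$ and $\overline{\mu}^\CK$. The right-down composite yields
\[
\left(\textstyle\int {\rm d}\mu^\CK \circ \int {\rm d}\overline{\mu}^\CH\right)(u)(x) = \int_{(t_0^H)\inv(x)} \int_{(t_0^V)\inv(x)} u(k,h)\, {\rm d}\mu^\CH_x(h)\, {\rm d}\mu^\CK_x(k),
\]
while the down-right composite yields
\[
\left(\textstyle\int {\rm d}\mu^\CH \circ \int {\rm d}\overline{\mu}^\CK\right)(u)(x) = \int_{(t_0^V)\inv(x)} \int_{(t_0^H)\inv(x)} u(k,h)\, {\rm d}\mu^\CK_x(k)\, {\rm d}\mu^\CH_x(h).
\]
Both iterated integrals are over the same product $\big(t_0^H\big)\inv(x) \times \big(t_0^V\big)\inv(x)$, with the same compactly supported continuous integrand $u$ restricted to this product, and with the same pair of Radon measures $\mu^\CK_x$ and $\mu^\CH_x$ on the two factors.

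At this point the classical Fubini theorem for Radon measures on locally compact Hausdorff spaces applied to compactly supported continuous integrands gives the desired equality of iterated integrals, completing the argument. The main obstacle is purely bookkeeping: correctly identifying the fibers of $\pr_1$ and $\pr_2$ under the pullback description of $\CG^\lrcorner$ so that the ``overline'' measures $\overline{\mu}^\CH_k$ and $\overline{\mu}^\CK_h$ translate cleanly into the original Haar measures $\mu^\CH_x$ and $\mu^\CK_x$ when the outer integration pins down $x \in M$. Once this identification is in place, no groupoid structure (invariance of the Haar systems, existence of $t^D$, etc.) is actually needed for this particular lemma—it is purely a statement about the fibered product $\CG^\lrcorner$ and compatible smooth families of measures.
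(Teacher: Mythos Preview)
Your proposal is correct and follows essentially the same route as the paper: unpack both composites pointwise at $x\in M$ to obtain the two iterated integrals over $\big(t_0^H\big)^{-1}(x)\times\big(t_0^V\big)^{-1}(x)$ and then invoke Fubini's theorem. The paper's proof is more terse, omitting your remarks on smoothness, compact support, and the fiber identifications, but the argument is the same.
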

  \begin{proof}
    The claim is an immediate consequence of Fubini's theorem.
    Let $ u \in C^\infty_c(\CG^\lrcorner) $. Then computing the expressions
      $\left( \int {\rm d} {\mu}^\CH \right) \left( \int {\rm d}\overline{\mu}^\CK  \right)u$ and $ \left( \int {\rm d} \mu^\CK\right) \left( \int {\rm d}\overline{\mu}^\CH  \right)u$
    yields the functions
    \[ x  \mapsto  \int_{(t^V_0)\inv(x)} \left[ \int_{(t^H_0)\inv(x)} u(k,h) {\rm d} \mu_x^\CK(k) \right] {\rm d} \mu_x^\CH(h)\]
    and
    \[  x  \mapsto  \int_{(t^H_0)\inv(x)} \left[ \int_{(t^V_0)\inv(x)} u(k,h) {\rm d} \mu_x^\CH(h) \right] {\rm d} \mu_x^\CK(k),\]
    respectively. By Fubini's theorem, these two functions are equal.
  \end{proof}

\begin{Theorem}
   Suppose \smash{$ \big\{\mu_{(k,h)}^D\big\}_{(k,h) \in \CG^\lrcorner } $}, \smash{$ \big\{\mu_{x}^\CH \big\}_{x \in M }$} and \smash{$ \big\{ \mu_x^\CK \big\}_{x \in M} $} constitute a double Haar system as in Definition~{\rm\ref{defn:double.haar.system}}. There exist unique Haar systems $ \{\mu_{k}^\circ\}_{k \in \CK } $ and $ \{ \mu_{h}^\bullet \}_{h \in \CH} $ with respect to the vertical, $ \circ $, and horizontal, $ \bullet $, products on $ \CG$ which are uniquely determined by the property that they make the following diagram of fiberwise integrals commutes:
  \begin{equation}\label{diagram:double.haar.theorem}
    \begin{tikzcd}
    C^\infty_c(\CG) \arrow[ddr, bend right, "\int {\rm d} \mu^\bullet"] \arrow[rrd, bend left, "\int {\rm d} \mu^\circ"] \arrow[dr, "\int {\rm d} \mu^D"] & & \\
          & C^\infty_c(\CG^\lrcorner) \arrow[d, "\int {\rm d}\overline{\mu}^\CK"] \arrow[r, "\int {\rm d}\overline{\mu}^\CH"] & C^\infty_c(\CK) \arrow[d, "\int {\rm d} \mu^\CK"] \\
     &  C^\infty_c(\CH) \arrow[r, "\int {\rm d} \mu^\CH"] &\, C^\infty_c(M).
    \end{tikzcd}
  \end{equation}
  \end{Theorem}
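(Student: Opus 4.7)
The plan is to define $\mu^\circ$ and $\mu^\bullet$ by iterated integration along the double target fibration, and then to verify the Haar system axioms and the commutativity of the diagram directly. Specifically, for $k \in \CK$ I declare $\mu^\circ_k$ to be the Radon measure on $\big(t^V_1\big)\inv(k)$ characterized by
\begin{equation*}
\int u\, {\rm d}\mu_k^\circ = \int_{(t_0^V)\inv(t_0^H(k))} \int_{(t^D)\inv(k,h)} u(a)\, {\rm d}\mu^D_{(k,h)}(a)\, {\rm d}\mu^\CH_{t_0^H(k)}(h)
\end{equation*}
for all $u \in C^\infty_c(\CG)$, and I define $\mu^\bullet_h$ analogously after swapping the roles of $\CK$ and $\CH$ (and of $\mu^\CH$ and $\mu^\CK$). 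By construction the fiberwise integral maps factor as $\int {\rm d}\mu^\circ = \big(\int {\rm d}\overline{\mu}^\CH\big) \circ \big(\int {\rm d}\mu^D\big)$ and $\int {\rm d}\mu^\bullet = \big(\int {\rm d}\overline{\mu}^\CK\big) \circ \big(\int {\rm d}\mu^D\big)$, so commutativity of the diagram in \eqref{diagram:double.haar.theorem} reduces immediately to Lemma~\ref{lemma:fiberwise.integral.lemma}. Smoothness of $\{\mu^\circ_k\}$ is likewise automatic: $k \mapsto \int u\, {\rm d}\mu^\circ_k$ is obtained by composing smooth fiberwise integrations, each of which preserves $C^\infty_c$.

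The main substance lies in left invariance. Fix $a \in \CG$ with $s^V_1(a) = k_0$ and $t^V_1(a) = k_1$. By the equivariance of $t^D$ established just before Definition~\ref{defn:double.haar.system}, the translation $L^V_a$ sends $\big(t^D\big)\inv(k_0,h')$ diffeomorphically onto $\big(t^D\big)\inv\big(k_1,\, t^H_1(a) \circ h'\big)$, and this fiberwise map is $\mu^D$-preserving by property~(2) of the double Haar system. The induced map on the base of $t^D$ is $h' \mapsto t^H_1(a) \circ h'$, i.e., left translation in $\CH \grpd M$ by $t^H_1(a)$; the double-functor identities $t_0^V \circ t_1^H = t_0^H \circ t_1^V$ and $s_0^V \circ t_1^H = t_0^H \circ s_1^V$ confirm that this is a well-defined map between the appropriate target fibers $\big(t_0^V\big)\inv\big(t_0^H(k_i)\big) \subset \CH$, and it is measure preserving by left invariance of $\mu^\CH$. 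Substituting into the defining formula for $\mu^\circ_{k_0}$ and making the change of variables $h := t^H_1(a) \circ h'$ then yields $\int (u \circ L^V_a)\,{\rm d}\mu^\circ_{k_0} = \int u\,{\rm d}\mu^\circ_{k_1}$, as required. The argument for $\mu^\bullet$ is symmetric, using left invariance of $\mu^\CK$ in place of $\mu^\CH$.

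For uniqueness, any Haar system $\{\nu_k\}_{k \in \CK}$ on $\CG \grpd \CK$ making the upper triangle of \eqref{diagram:double.haar.theorem} commute must satisfy $\int u\, {\rm d}\nu_k$ equal to the right-hand side of the defining formula for every $u \in C_c^\infty(\CG)$ and every $k$. Since $C^\infty_c$ separates Radon measures on each target fiber, $\nu_k = \mu^\circ_k$; the same argument handles $\mu^\bullet$.

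The main obstacle is the bookkeeping needed for left invariance: one has to use the double-functoriality identities to see that $L^V_a$ projects precisely to left translation by $t^H_1(a)$ on the appropriate $\CH$-target fiber, so that the two successive invariance properties (of $\mu^D$ along the fibers of $t^D$ and of $\mu^\CH$ on the base) can be combined via Fubini into a single invariance of $\mu^\circ$. Once this is in place, the rest is routine.
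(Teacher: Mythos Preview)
Your proposal is correct and follows essentially the same route as the paper: define $\mu^\circ$ (and $\mu^\bullet$) by the iterated integral along $t^D$ and then along the $\CH$-target fibers, obtain smoothness by composition of smooth fiberwise integrations, and verify left invariance by combining the $\mu^D$-invariance on the $t^D$-fibers with the $\mu^\CH$-invariance of the induced base translation $h' \mapsto t^H_1(a)\circ h'$. The only cosmetic difference is that the paper invokes the Riesz--Markov--Kakutani theorem at the outset to assert existence and uniqueness simultaneously, whereas you write down the defining formula first and argue uniqueness at the end via the fact that $C^\infty_c$ separates Radon measures; these are equivalent packagings of the same argument.
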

  \begin{proof}
       By the Riesz-–Markov–-Kakutani representation theorem, there must exist a unique family of measures $ \{ \mu_k^\circ \}_{k \in \CK} $ on the fibers of $ t^V_1 $ with the property that it makes the following diagram commutes:
    \[
    \begin{tikzcd}
         C^\infty_c(\CG) \arrow[rrd, bend left, "\int {\rm d} \mu^\circ"] \arrow[dr, "\int {\rm d} \mu^D"] & & \\
      & C^\infty_c(\CG^\lrcorner)  \arrow[r, "\int {\rm d}\overline{\mu}^\CH"] &\, C^\infty_c(\CK).
    \end{tikzcd}
    \]
    This means that for any $ u \in C^\infty_c(\CG) $ and $ k \in \CK $, we have that
    \[
    \int_{(t^V_1)\inv(k)} u(b) {\rm d} \mu^\circ_{k}(b) =
    \int_{(t^V_0) \inv(t^H_0(k))} \left[\int_{(t^D)\inv(k,h)} u(b) {\rm d} \mu^D_{(k,h)}(b) \right] \mu^\CH_{t^H_0(k)} (h).
    \]

       We claim that this measure is a Haar measure for the $ \circ $ composition operation on $ \CG $. Any such family of measures will clearly be smooth since fiberwise integration will map smooth functions to smooth functions.
    Therefore, the only remaining thing to show is that $ \mu^\circ $ is invariant under the vertical product.

    In other words, we must show that for all $ a \in \CG $ with $ s^V_1(a) = k $ and $ t^V_1(a) = k' $, then we have that the diffeomorphism
    \[ L^V_a \colon\ \big(t^V_1\big)\inv(k) \to \big(t^V_1\big)\inv(k'), \qquad b \mapsto a \circ b \]
    is measure preserving.
    In terms of integrals, this is equivalent to proving that for all $ u \in C^\infty_c(\CG) $, we have that
    \[
     \int_{(t^V_1)\inv(k)} u(a \circ b) {\rm d} \mu_k^\circ(b) = \int_{(t^V_1)\inv(k')} u( b) {\rm d} \mu_{k'}^\circ (b).
    \]
    To show this, let us assume that $ a $ is of the form
\[
a = \begin{tikzcd}
   x \arrow[rr, "k"] \arrow[dd, "h_1", swap] & & y \arrow[dd, "h_2"] \\
   & a & \\
   z\arrow[rr, "k'", swap] & & \,w,
\end{tikzcd}
\]
so the source of $ k $ is $ x $ and the target of $ k $ is $ y $.

Now given a function $ u \in C^\infty_c(\CG) $, from the definition of $ \mu^\circ $, we have that
       \[ \int_{(t^V_1)\inv(k)} u(a \circ b) {\rm d} \mu_k^\circ(b) = \int_{(t^V_0)\inv(y)} \left[ \int_{(t^D)\inv(k,h)} u(a \circ b) {\rm d} \mu_{(k,h)}^D(b) \right] {\rm d} \mu_{y}^\CH(h).
     \]
    On the other hand,
    \[
       \int_{(t^V_1)\inv(k')} u( b) {\rm d} \mu_{k'}^\circ (b) = \int_{(t^V_0)\inv(w)} \left[ \int_{(t^D)\inv(k', h)} u( b) {\rm d} \mu_{(k',h)}^D(b) \right] {\rm d} \mu_{w}^\CH(h).
    \]
       Since $ \big\{ \mu_p^\CH\big\}_{p \in M } $ is a Haar system and is invariant for the product in $ \CH $, we can do a substitution for the outside integral of right-hand side where we replace $ h $ with $ h_2 \circ h $ and $ w $ with $ y $, so we get
    \[
       \int_{(t^V_1)\inv(k')} u( b) {\rm d} \mu_{k'}^\circ (b) = \int_{(t^V_0)\inv(y)} \left[ \int_{(t^D)\inv(k', h_2 \circ h)} u(b) {\rm d} \mu_{(k', h_2 \circ h)}^D(b) \right] {\rm d} \mu_{y}^\CH(h).
    \]
       Since we have assumed that $ \mu^D $ is invariant under left translation, we can perform a substitution on the right-hand side where $ b $ is replaced by $ a \circ b $ and $ (k', h_2 \circ h)$ is replaced by $ (k,h) $ and we get
     \[
       \int_{(t^V_1)\inv(k')} u( b) {\rm d} \mu_{k'}^\circ (b) = \int_{(t^V_0)\inv(y)} \left[ \int_{(t^D)\inv(k,h)} u(a \circ b) {\rm d} \mu_{(k,h)}^D(b) \right] {\rm d} \mu_{y}^\CH(h)
    \]
    and so
    \[
     \int_{(t^V_1)\inv(k)} u(b) {\rm d} \mu_{k}^\circ(b) = \int_{(t^V_1)\inv(k')} u( a \circ b) {\rm d} \mu_{k'}^\circ (b).
    \]
  This shows that $ \mu^\circ $ is a Haar measure for $ \circ $. A symmetrical argument can be performed for $ \mu^\bullet $.

  By Lemma~\ref{lemma:fiberwise.integral.lemma} and the construction of $ \mu^\circ $ and $ \mu^\bullet $, it follows that \eqref{diagram:double.haar.theorem} commutes.
  \end{proof}

\begin{Example}[discrete double groupoids]
   Suppose $ \CG $ is countable. Then if we take the counting measures for $ \mu^D$, $ \mu^\CK$ and $ \mu^\CH $ this can easily be seen to be a double Haar system. The induced vertical and horizontal Haar systems on $ \CG $ are also just the counting measure.
\end{Example}
\begin{Example}[Lie groups]\label{example:Lie.groups}
Given a double groupoid of the form
  \[\begin{tikzcd}
G \darrow \rarrow & G \darrow \\
\{*\} \rarrow & \,\{*\}.
\end{tikzcd}
\]
Note that in this case $ \CG^\lrcorner \cong G $ and $ t^D \colon G \to G $ is just the identity map. Therefore, the fibers of~$ t^D $ are singletons. A standard Haar measure on $ G $ induces a double Haar system, where we take~$ \mu^D $ to be the trivial measure.
\end{Example}
\begin{Example}[strict 2-groups]
    \[\begin{tikzcd}
G_2 \darrow \rarrow & G_1 \darrow \\
\{*\} \rarrow & \,\{*\}.
\end{tikzcd}
\]
In this example, the horizontal target map is trivial so $ t^D = t \colon G_2 \to G_1 $. In this case, to define a double Haar system one must choose a Haar system on $ G_2 \grpd G_1 $ and a Haar measure on the group $ G_1 \grpd \{ * \} $. In order to satisfy the axioms of a double Haar system, the Haar system on~${ G_2 \grpd G_1}$ must be invariant under translation relative to the horizontal structure $ G_2 \grpd \{ * \} $. This will induce a Haar measure on the group structure $ G_2 \grpd \{ * \} $ by composing integration along fibers of $ t $ with integration along the Haar measure for $ G_1 $.
\end{Example}
\subsection{Compatibility for countable double groupoids}
Let us suppose that $ \CG $ is countable so it is equipped with the discrete topology. We saw previously that choosing counting measures induces a double Haar system in this case. Furthermore, in Example~\ref{example:category.algebra.convolution}, we saw that $ C_c^\infty(\CG) $ can be identified with the freely generated vector space~$ \BBC \CG $. Furthermore, under this isomorphism the convolution operations \smash{$ \stardot $} and \smash{$ \starcirc$} are just the corresponding groupoid algebra products.

Our first observation is just a slight modification of the Eckmann--Hilton lemma.
\begin{Proposition}
Suppose $ \CG $ is a countable double groupoid. Then the convolution operations \smash{$ \stardot $} and \smash{$ \starcirc $} constitute a compatible pair of binary operations if and only if the two groupoid products are equal, $ \bullet = \circ $.
\end{Proposition}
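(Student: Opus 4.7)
Since $\CG$ is equipped with the discrete topology, $C_c^\infty(\CG) \cong \BBC\CG$ and the convolutions $\stardot, \starcirc$ coincide with the two groupoid algebra products associated to $\bullet$ and $\circ$. By bilinearity, compatibility of $\stardot$ and $\starcirc$ on $\BBC\CG$ is equivalent to compatibility on the delta-function basis. For basis elements, $\delta_a \starcirc \delta_b = \delta_{a \circ b}$ when $a \circ b$ is defined and is zero otherwise, and analogously for $\stardot$; hence each side of the compatibility law on deltas is either a single delta function or zero, and the double groupoid law forces the two deltas to coincide when both are nonzero. Compatibility therefore reduces to the combinatorial condition that, for every tuple $(a, b, c, d) \in \CG^4$, the triple $\bigl(a \circ b,\ c \circ d,\ (a \circ b) \bullet (c \circ d)\bigr)$ consists of well-defined elements if and only if $\bigl(a \bullet c,\ b \bullet d,\ (a \bullet c) \circ (b \bullet d)\bigr)$ does.

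For the forward direction, I will show every arrow of $\CH$ is an identity; the statement for $\CK$ follows by the symmetric argument in which $\bullet$ and $\circ$ exchange roles. Fix $p \in \CH$ with $s_0^V(p) = y$ and $t_0^V(p) = w$, and apply the combinatorial condition to the tuple $\bigl(u_1^H(p),\ u_1^H(p\inv),\ \mathbf{1}_y,\ \mathbf{1}_y\bigr)$, writing $\mathbf{1}_y := u_1^H(u_0^V(y)) = u_1^V(u_0^H(y))$ for the double unit at $y$. A direct computation with the square picture yields $u_1^H(p) \circ u_1^H(p\inv) = u_1^H(p\inv \circ_\CH p) = u_1^H(u_0^V(y)) = \mathbf{1}_y$, and consequently $(u_1^H(p) \circ u_1^H(p\inv)) \bullet (\mathbf{1}_y \circ \mathbf{1}_y) = \mathbf{1}_y \bullet \mathbf{1}_y = \mathbf{1}_y$, so the LHS triple is entirely defined. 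The combinatorial condition then demands that $u_1^H(p) \bullet \mathbf{1}_y$ be defined, which, since $t_1^H(\mathbf{1}_y) = u_0^V(y)$, forces $p = u_0^V(y)$. Thus every arrow of $\CH$ is an identity, and similarly for $\CK$. With $\CH$ and $\CK$ trivial, every edge of every element of $\CG$ is an identity, which forces all four corners of $a \in \CG$ to coincide in $M$; hence $\CG = \bigsqcup_{x \in M} \CG_x$ is a disjoint union of isotropy fibers on which both $\bullet$ and $\circ$ are everywhere-defined, unital binary operations. The classical Eckmann--Hilton lemma applied fiberwise then yields $\bullet = \circ$ on $\CG$.

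For the converse, suppose $\bullet = \circ$. The functor identity $s^V(a \bullet b) = s^V(a) \bullet_\CK s^V(b)$, combined with the groupoid axiom $s^V(a \circ b) = s^V(b)$ (which, since $\bullet = \circ$, also reads $s^V(a \bullet b) = s^V(b)$), forces each $s^V(a)$ to act as a left identity in $\CK$, hence to be itself an identity arrow. Analogous analysis of the other source and target maps exhibits $\CG$ as a disjoint union of isotropy groups over $M$. The double groupoid compatibility law $(a \circ b) \bullet (c \circ d) = (a \bullet c) \circ (b \bullet d)$ with $\bullet = \circ$, combined with associativity, reduces to $bc = cb$ whenever $(a,b,c,d) \in \CG^\boxplus$, forcing each isotropy group to be abelian. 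Consequently the convolution on $\BBC\CG$ is commutative, and for a single associative product the compatibility law $(u * v) * (w * x) = (u * w) * (v * x)$ is equivalent via associativity to the commutativity $v * w = w * v$, completing the proof of the converse.

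The principal technical step is the forward direction: one must find a 4-tuple on which the LHS of the combinatorial condition is automatically satisfied while the RHS encodes a nontrivial constraint on the side groupoids. The tuple $\bigl(u_1^H(p), u_1^H(p\inv), \mathbf{1}_y, \mathbf{1}_y\bigr)$ is engineered so that the horizontal unit squares over $p$ and $p\inv$ \emph{cancel} into a double unit upon vertical composition, making the LHS trivially defined while the RHS forces $p = u_0^V(y)$. Once $\CH$ and $\CK$ are trivialized, the remainder is a routine fiberwise invocation of the classical Eckmann--Hilton lemma, in keeping with the proposition's role as a direct adaptation of that result to the convolution setting.
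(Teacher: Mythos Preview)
Your overall strategy is sound and genuinely different from the paper's argument: the paper builds truncated approximate units $e_n^V$, $e_m^H$ for the two structures and, using compatibility, shows directly that $\stardot = \starcirc$ on $C_c^\infty(\CG)$, whence $\bullet = \circ$ on basis elements. You instead reduce compatibility on $\BBC\CG$ to a purely combinatorial ``defined-iff-defined'' condition on delta tuples, use a carefully chosen tuple to trivialize $\CH$ and $\CK$, and then invoke the classical Eckmann--Hilton lemma fiberwise. Your route is more structural and has the virtue of making the converse explicit (the paper essentially omits it). The multilinearity reduction and the observation that the two deltas agree when both sides are nonzero are both correct.

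However, there is a genuine computational slip in the forward direction that breaks the argument as written. Since $u_1^H \colon (\CH, \circ_\CH) \to (\CG, \circ)$ is a \emph{covariant} functor of the vertical structures, one has
\[
u_1^H(p) \circ u_1^H\big(p\inv\big) \;=\; u_1^H\big(p \circ_\CH p\inv\big) \;=\; u_1^H\big(u_0^V(t_0^V(p))\big) \;=\; \mathbf{1}_w,
\]
not $\mathbf{1}_y$ as you claim. With the corrected value, the left-hand side of your combinatorial condition for the tuple $\bigl(u_1^H(p),\ u_1^H(p\inv),\ \mathbf{1}_y,\ \mathbf{1}_y\bigr)$ becomes $\mathbf{1}_w \bullet \mathbf{1}_y$, which is defined only when $w = y$. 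Thus the LHS is \emph{not} automatically defined, and your implication collapses: at best it shows that every \emph{loop} in $\CH$ is an identity, which is not enough to trivialize $\CH$.

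The fix is immediate: replace the last two entries by $\mathbf{1}_w$, i.e., use the tuple $\bigl(u_1^H(p),\ u_1^H(p\inv),\ \mathbf{1}_w,\ \mathbf{1}_w\bigr)$. Then the LHS computes to $\mathbf{1}_w \bullet \mathbf{1}_w = \mathbf{1}_w$, which is always defined, while the RHS requires $u_1^H(p) \bullet \mathbf{1}_w$ to be defined, i.e., $s_1^H\big(u_1^H(p)\big) = t_1^H(\mathbf{1}_w)$, which reads $p = u_0^V(w)$. This forces every arrow of $\CH$ to be an identity, and the remainder of your argument goes through unchanged. Equivalently, you may keep $\mathbf{1}_y$ but swap $p$ and $p\inv$ in the first two slots.
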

\begin{proof}
Since $ \CG $ is countable, let us index it by natural numbers for convenience
$\CG = \{ g_i \}_{i \in \BBN}$.
Now consider the subset $ U^V \subset \CG $ of elements which are units with respect to the vertical structure. Similarly, let $U^H \subset \CG $ be the subset of units with respect to the horizontal structure.

Now, for natural numbers $ n \in \BBN $, consider the following functions on $ \CG $:
\[
e_n^V \colon\ \CG \to \BBC, \qquad e_n^V := \sum\limits^{i<n}_{g_i \in U^V} \delta_{g_i},\qquad
e_m^H \colon\ \CG \to \BBC, \qquad e_m^H := \sum\limits^{i<m}_{g_i \in U^H} \delta_{g_i}.
\]
In other words, $ e^V_n $ is the sum of the delta functions for first $ n $ elements of $ \CG $ that are vertical units. Similarly for $ e^H_m$.

Recall that the product \smash{$ \delta_x \stardot \delta_y $} of two delta functions behaves as follows: If $ (x,y) $ is not a~composable pair with respect to $ \bullet $, then the product is zero. On the other hand, if $ (x,y) $ is a~composable pair, the product is \smash{$ \delta_x \stardot \delta_y = \delta_{x \bullet y} $}. Since the function $ e^V_n $ is a sum of delta functions for \emph{units}, we conclude that
\[
e^V_n \stardot e^H_m = \sum^{i < \min(m,n)}_{g_i \in U^V \cap U^H} \delta_{g_i}.
\]
In other words, the convolution of $ e^V_n $ with $ e^H_m$ is just a sum of delta functions which are units with respect to both structures.

Furthermore, observe that for all $ u \in C_c^\infty(\CG) $, there exists a natural number $ n $ such that
\smash{$e^V_n \starcirc u = u \starcirc e^V_n = u$}.
Similarly for $ e^H_m$ and \smash{$ \stardot $}.

Now consider the equation
\begin{equation}\label{equation:idontknowwhat}
\big(e_n^V \starcirc e_m^H\big) \stardot \big( e_m^H \starcirc e^V_n\big)  = \big(e_n^V \stardot e_m^H\big) \starcirc \big( e_m^H \stardot e^V_n\big).
\end{equation}
From our previous observation, the result of the above computation will be a sum of delta functions for elements which are units with respect to both product structures. On the other hand, if $ n $ is large enough relative to $m$, we have that the left-hand side of equation~\eqref{equation:idontknowwhat} becomes
\[
\big(e_n^V \starcirc e_m^H\big) \stardot \big( e_m^H \starcirc e^V_n\big) = e_m^H \stardot e^H_m = e^H_m.
\]
This implies that every one of the constituent delta functions of $ e^H_m $ comes from a unit for the vertical composition. A symmetrical argument coming from computing the right-hand side of equation~\eqref{equation:idontknowwhat} implies that each of the delta functions for $ e^V_n $ come from elements that are units with respect to horizontal composition. In other words, the units for vertical and horizontal convolutions are the same and so~${e_n := e^H_n = e^V_n}$.
To finish the proof, we now consider that for a natural number $ n $ and element~${u,v \in C_c^\infty(\CG)}$ we have that
\[
\big(u \starcirc e_n\big) \stardot \big(e_n \starcirc v\big)  = \big(u \stardot e_n \big) \starcirc \big( e_n \stardot v\big).
\]
For $ n $ sufficiently large, it follows that
\[ u \stardot v = \big(u \starcirc e_n\big) \stardot \big(e_n \starcirc v\big)  = \big(u \stardot e_n \big) \starcirc \big( e_n \stardot v\big) = u \starcirc v.
\]
Hence \smash{$ \stardot$} and \smash{$ \starcirc$} are equal. Since \smash{$ \starcirc $} and \smash{$ \stardot $} are groupoid algebras, by computing \smash{$ \starcirc $} and \smash{$ \stardot $} on basis elements, it follows that~${\bullet = \circ}$.
\end{proof}

We consider it likely that a version of the above proof exists for convolution algebras of double Lie groupoids. However, we will not include a proof of the more general case here.

\section{Noncommutative torus}\label{Section:noncommutative.torus}
We will now take a look at a slightly more complicated but quite important example: the noncom\-mutative torus. Let $ r \in \BBR $ be a fixed real number.
We consider the circle group $ \BBS^1 $ as a~quotient of $ \BBR $ by the subgroup $ 2 \pi \BBZ $. Now, consider the group homomorphism
$\phi_r \colon \BBZ \to \BBS^1$, $\phi_r(n) = [rn]_{2 \pi}$.
With this data, one can construct a double groupoid (indeed a 2-group) of the form\looseness=-1
 \[\begin{tikzcd}
\BBZ \ltimes \BBS^1 \darrow \rarrow & \BBS^1 \darrow \\
\{ * \} \rarrow &\, \{ * \}.
\end{tikzcd}
\]
As a set, $ \BBZ \ltimes \BBS^1 $ is just the standard Cartesian product.
The semi-direct product notation is used due to the fact that the groupoid structure on $ \BBZ \ltimes \BBS^1 \grpd \BBS^1 $ is the action groupoid associated to the homomorphism $ \phi_r \colon \BBZ \to \BBS^1 $.

In other words, the source and target are given by
$s(n,\theta) = \theta$, $ t(n,\theta) = rn + \theta$.
If we have~$ (m,\psi) $ and $ (n,\theta)$ composable, then
$(m,\psi) \circ (n, \theta) = (n + m, \psi)$.
Finally, the inverse map is
$i(n,\theta) = (-n, rn + \theta) $.

The group structure on $ \BBS^1 \grpd \{ * \} $ is just the standard circle group and the group structure on $ \BBZ \ltimes \BBS^1 \grpd \{ * \} $ is the one obtained by regarding it as a simple Cartesian product of groups.

As usual, we will refer to $ \circ $ as the \emph{vertical composition} and $ \bullet $ as the \emph{horizontal composition}.

\subsection{Orthonormal basis}
The space of functions on $ \BBS^1 $, with Haar measure $\mu$ that has been normalized \big(so that $\mu\big(\BBS^1\big)=1$\big), has a particularly nice form. In particular, it admits a nice countable basis indexed by the integers. Since $ \BBZ \ltimes \BBS^1 $ is just the product space, any compactly supported function ${f\in C_c^\infty\big(\BBZ \ltimes \BBS^1 \big)}$ can be written as a sum of functions of the form $g \otimes h$ where $g\in C_c^\infty(\BBZ)$ and $h\in C^\infty\big(\BBS^1\big)$. We will take advantage of this basis to investigate the relationship between \smash{$ \stardot $} and \smash{$ \starcirc $}.

For each $ k \in \BBZ $, let $e_k(\theta)={\rm e}^{{\rm i} k\theta}$. The collection of functions $ \{ e_k \}_{k \in \BBZ} $ constitutes an orthonormal \big(relative to the $L^2$ inner product\big) basis for $ C^\infty\big(\BBS^1\big) $.

We can use these functions to construct a related basis for $ C_c^\infty\big(\BBZ \ltimes \BBS^1\big) $. Consider the collection of functions $\{u_{jk}\}_{j,k\in\BBZ} \subset C_c^\infty\big(\BBZ \ltimes \BBS^1 \big)$, where
\begin{equation}\label{ONB-torus}
u_{jk}(n,\theta) := \begin{cases} {\rm e}^{{\rm i} k\theta} & \text{if } n=j,\\
0 & \text{otherwise}.\end{cases}
\end{equation}
In other words, the index $ j $ refers to the level of $ \BBZ $ where $ u_{jk}$ is supported and the index $ k $ refers to the frequency of $ u_{jk} $.

\subsection{Convolution algebras}
Note that the source fibers of $ \BBZ \ltimes \BBS^1 \grpd \BBS^1 $ are discrete, so for our vertical Haar system $ \mu^V $, we simply take the counting measure. For a Haar measure on $ \BBZ \ltimes \BBS^1 \grpd \{ * \} $, we simply take $ \mu^H $ to be the product of the counting measure on $ \BBZ $ with the normalized Haar measure on $ \BBS^1$.

Utilizing the definition of the convolution algebras yields that, for any pair of compactly supported functions $u,v\in C_c^\infty\big(\BBZ \ltimes \BBS^1\big)$, we have
\begin{align*} %\label{def-stardot-torus}\label{def-starcirc-torus}
\begin{aligned}
&u\stardot v(n,\theta)= \sum_{m \in \BBZ} \int_{\phi \in \BBS^1} u(m,\phi)v(n-m,\theta-\phi){\rm d} \phi,\\
&u\starcirc v(n,\theta)= \sum_{m\in\BBZ} u(m,r(n-m)+\theta)v(n-m,\theta).
\end{aligned}
\end{align*}
Recall that the real number $ r \in \BBR $ refers to the constant determining the action of $ \BBZ $ on $ \BBS^1 $. Note that one of these convolutions includes an integral while the other only includes a sum. Since we assume that $ u $ and $ v $ are compactly supported, the sums are finite.

\begin{Proposition}\label{prop:nct.circ.calculation} Let $u_{ab}$ and $u_{cd}$ be orthonormal basis elements as defined in {\rm\eqref{ONB-torus}}. Then
\begin{equation}\label{nct-circ-conv}
u_{ab}\starcirc u_{cd}={\rm e}^{{\rm i} r bc} u_{(a+c)(b+d)}.
\end{equation}
\end{Proposition}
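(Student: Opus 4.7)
The plan is a direct computation, exploiting the fact that each basis element $u_{jk}$ is essentially a delta function in the first coordinate times a character in the second. Because of this indicator-like structure, the (a priori infinite) sum in the definition of $\starcirc$ collapses to a single term.

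First I would write out the definition:
\[
u_{ab}\starcirc u_{cd}(n,\theta)= \sum_{m\in\BBZ} u_{ab}(m, r(n-m)+\theta)\, u_{cd}(n-m,\theta).
\]
By the definition \eqref{ONB-torus} of $u_{ab}$ and $u_{cd}$, the $m$-th summand vanishes unless simultaneously $m=a$ and $n-m=c$. Thus the entire sum is zero unless $n=a+c$, in which case only the term $m=a$ survives.

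Next I would substitute $m=a$, $n-m=c$ into the nonzero summand. The first factor becomes $e^{ib(rc+\theta)}$ and the second becomes $e^{id\theta}$, so their product is $e^{irbc}\,e^{i(b+d)\theta}$. Comparing with the definition of $u_{(a+c)(b+d)}(n,\theta)$, which equals $e^{i(b+d)\theta}$ precisely when $n=a+c$ and is zero otherwise, one sees both sides of \eqref{nct-circ-conv} agree pointwise.

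There is no real obstacle here; the only thing to be careful about is tracking the argument $r(n-m)+\theta$ of $u_{ab}$ (which comes from the action of $\BBZ$ on $\BBS^1$ via $\phi_r$) and making sure the factor $e^{irbc}$ arises correctly once one sets $n-m=c$. The formula is a small warm-up computation, and the interesting content is really that the noncommutative twist $e^{irbc}$ is manifestly encoded in the vertical convolution on the basis $\{u_{jk}\}$.
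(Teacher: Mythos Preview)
Your proposal is correct and follows essentially the same direct computation as the paper: expand the defining sum for $\starcirc$, use the delta-in-the-first-coordinate structure of $u_{ab}$ and $u_{cd}$ to reduce to the single term $m=a$, $n-m=c$, and read off the phase ${\rm e}^{{\rm i}rbc}$ together with ${\rm e}^{{\rm i}(b+d)\theta}$.
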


\begin{proof}
We have
\begin{align}
   u_{ab}\starcirc u_{cd}(n,\theta) & = \sum_{m\in \BBZ} u_{ab}(m,r(n-m)+\theta)u_{cd}(n-m),\theta)\nonumber\\
  & = u_{ab}(a,r(n-a)+\theta)u_{cd}(n-a),\theta).\label{expression-starcirc}
\end{align}
Note that if $c\neq n-a$ then the expression \eqref{expression-starcirc} just becomes zero. If $c=n-a$, then the expression \eqref{expression-starcirc} becomes
$
{\rm e}^{{\rm i}rbc}{\rm e}^{{\rm i} b\theta}{\rm e}^{{\rm i} d\theta}={\rm e}^{{\rm i} rbc} {\rm e}^{{\rm i}(b+d)\theta}$.
So
\[
u_{ab}\starcirc u_{cd}(n,\theta)=\begin{cases} {\rm e}^{{\rm i} rbc} u_{(a+c)(b+d)} & \text{if } n=a+c,\\
0 & \text{otherwise}
\end{cases}
\]
thus proving \eqref{nct-circ-conv}.
\end{proof}

The above calculation is the standard method for recovering the classical ``noncommutative torus'' algebra out of the action groupoid associated to a homomorphism $ \BBZ \to \BBS^1 $.

Now we compute the other convolution in this basis.
\begin{Proposition} Let $u_{ab}$ and $u_{cd}$ be orthonormal basis elements as defined in \eqref{ONB-torus}. Then
\begin{equation*}\label{nct-dot-conv}
u_{ab}\stardot u_{cd}=\begin{cases} u_{(a+c)b} & \text{if } b=d,\\
0 & \text{otherwise}.
\end{cases}
\end{equation*}
\end{Proposition}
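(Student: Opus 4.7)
The plan is a direct computation from the definition of $\stardot$ applied to the two basis elements. First, I substitute into
\[
u_{ab} \stardot u_{cd}(n,\theta) = \sum_{m \in \BBZ} \int_{\BBS^1} u_{ab}(m,\phi)\, u_{cd}(n-m, \theta - \phi)\, {\rm d}\phi,
\]
and observe that $u_{ab}(m,\phi)$ vanishes unless $m = a$, so only a single term of the sum contributes. That term further vanishes unless $n - a = c$, which pins the support of the convolution to the fiber $n = a + c$.

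Next, on that fiber the remaining integral simplifies to
\[
\int_{\BBS^1} e^{ib\phi}\, e^{id(\theta - \phi)}\, {\rm d}\phi = e^{id\theta} \int_{\BBS^1} e^{i(b-d)\phi}\, {\rm d}\phi.
\]
The key step is to invoke orthogonality of the characters $e_k(\phi) = e^{ik\phi}$ with respect to the normalized Haar measure on $\BBS^1$: the integral equals $1$ if $b = d$ and $0$ otherwise. When $b = d$, the resulting function $e^{ib\theta}$ on the fiber $n = a+c$ is by definition $u_{(a+c)b}(n,\theta)$, which gives the claimed formula.

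I do not expect a serious obstacle here; the entire argument is a clean application of character orthogonality on the circle. The only point requiring care is to keep track of the Haar normalization fixed earlier (so that $\mu\big(\BBS^1\big) = 1$), which is what makes the integral return a Kronecker delta with no extra constants. It is worth contrasting this with the $\starcirc$ computation in Proposition~\ref{prop:nct.circ.calculation}, where the horizontal $\BBS^1$-integration is replaced by a pure sum over $\BBZ$ and so no such orthogonality constraint arises; this is precisely why $\stardot$ is far more restrictive on the basis $\{u_{jk}\}$, forcing the frequency indices $b$ and $d$ to match while $\starcirc$ only forces the $\BBZ$-indices to add.
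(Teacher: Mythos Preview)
Your proposal is correct and follows essentially the same approach as the paper's own proof: both collapse the $\BBZ$-sum to the single term $m=a$, identify the support condition $n=a+c$, and then reduce the remaining $\BBS^1$-integral to the orthogonality relation $\int_{\BBS^1} e^{i(b-d)\phi}\,{\rm d}\phi = \delta_{bd}$ under the normalized Haar measure. The only cosmetic difference is that you explicitly name character orthogonality, whereas the paper simply splits into the cases $b=d$ and $b\neq d$ and evaluates directly.
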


\begin{proof}
We have
\begin{align}
  \notag u_{ab}\stardot u_{cd}(n,\theta) & = \sum_{m\in\BBZ} \int_{\phi\in \BBS^1} u_{ab}(m,\phi)u_{cd}(n-m,\theta-\phi){\rm d}\phi\\
  & \label{conv-integral} = \int_{\phi\in\BBS^1} u_{ab}(a,\phi)u_{cd}(n-a,\theta-\phi){\rm d}\phi.
\end{align}
If $c=n-a$ (thus $n=a+c$), then the integral \eqref{conv-integral} becomes
\begin{equation}\label{int-proof}
 \int_{\BBS^1} {\rm e}^{{\rm i} b\phi}{\rm e}^{{\rm i} d(\theta-\phi)}{\rm d}\phi.
 \end{equation}
 If $b=d$, then \eqref{int-proof} becomes
$ {\rm e}^{{\rm i} d\theta} \int_{\BBS^1} 1 {\rm d}\phi
= {\rm e}^{{\rm i} d\theta}
= u_{(a+c)d}(n,\theta)
$
(since the measure on $\BBS^1$ has been normalized). Otherwise (if $b\neq d$ or $n\neq a+c$), we have zero.
\end{proof}

\subsection{Compatibility behavior of the two convolutions}
We observed earlier that for countable double groupoids, the compatibility of the convolution products appears to depend on multiplying ``composable'' elements. We will see that, under our basis, composability is determined by the \emph{frequency} component.

We consider the following two expressions:
\begin{equation}\label{comp-one}
\big(u_{ab}\starcirc u_{cd}\big)\stardot\big(u_{ef}\starcirc u_{gh}\big)
\end{equation}
and
\begin{equation}\label{comp-two}
\big(u_{ab}\stardot u_{ef}\big)\starcirc\big(u_{cd}\stardot u_{gh}\big).
\end{equation}

\begin{Proposition} The two expressions \eqref{comp-one} and \eqref{comp-two} are equal whenever $b=f$ and $d=h$. It also holds whenever $b+d\neq f+h$.
\end{Proposition}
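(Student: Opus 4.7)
The plan is to apply the two computational formulas already derived \eqref{nct-circ-conv} and its $\stardot$-analogue to both sides and compare the results directly. Since the basis elements $u_{jk}$ behave very cleanly under both convolutions, everything reduces to bookkeeping of indices and phase factors.

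First I would expand the left-hand side \eqref{comp-one}. Applying Proposition~\ref{prop:nct.circ.calculation} to each of the two inner $\starcirc$ products gives
\[
\big(u_{ab}\starcirc u_{cd}\big)\stardot\big(u_{ef}\starcirc u_{gh}\big)
= {\rm e}^{{\rm i} r(bc+fg)}\, u_{(a+c)(b+d)} \stardot u_{(e+g)(f+h)}.
\]
By the $\stardot$-formula, this is zero unless $b+d=f+h$, in which case it equals ${\rm e}^{{\rm i} r(bc+fg)}\, u_{(a+c+e+g)(b+d)}$.

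Next I would expand the right-hand side \eqref{comp-two}. The two $\stardot$ products vanish unless $b=f$ and $d=h$ simultaneously; if both equalities hold, the result is
\[
\big(u_{ab}\stardot u_{ef}\big)\starcirc\big(u_{cd}\stardot u_{gh}\big)
= u_{(a+e)b} \starcirc u_{(c+g)d}
= {\rm e}^{{\rm i} r b(c+g)}\, u_{(a+c+e+g)(b+d)}.
\]

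With both sides in hand, the two cases of the proposition fall out immediately. When $b=f$ and $d=h$, the left-hand side's phase ${\rm e}^{{\rm i} r(bc+fg)}={\rm e}^{{\rm i} r(bc+bg)}={\rm e}^{{\rm i} rb(c+g)}$ matches the right-hand side, and the index data $(a+c+e+g,b+d)$ agrees, so equality holds. When $b+d\neq f+h$, the left-hand side vanishes from the $\stardot$-formula; for the right-hand side to be nonzero, we would need both $b=f$ and $d=h$, which forces $b+d=f+h$, a contradiction, so the right-hand side also vanishes. There is no real obstacle here beyond careful tracking of the indices; the only mild subtlety is observing that the phase factors conspire correctly in the first case.
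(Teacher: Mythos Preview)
Your proof is correct and follows essentially the same approach as the paper: expand both expressions using the explicit formulas for $\starcirc$ and $\stardot$ on basis elements, then compare the results in the two cases. Your treatment is in fact slightly more explicit than the paper's, since you spell out why the phase factors agree when $b=f$, $d=h$ and why the right-hand side must vanish when $b+d\neq f+h$.
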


\begin{proof}
  The expression \eqref{comp-one} gives us
  \begin{align*}
     \big({\rm e}^{{\rm i} rbc}u_{(a+c)(b+d)}\big)\stardot\big({\rm e}^{{\rm i} rfg}u_{(e+g)(f+h)}\big)&
    = {\rm e}^{ r(bc+fg)}\big(u_{(a+c)(b+d)}\stardot u_{(e+g)(f+h)}\big)\\
    & = \begin{cases} {\rm e}^{{\rm i} r(bc+fg)} u_{(a+c+e+g)(b+d)} & \text{if } b+d=f+h,\\
    0 &\text{otherwise}.
    \end{cases}
  \end{align*}
  On the other hand, if $b\neq f$ or $d\neq h$, then \eqref{comp-two} is just zero. So if $b=f$ and $d=h$, then the expression \eqref{comp-two} becomes
\smash{$
     u_{(a+e)b} \starcirc u_{(c+g)d}
    = {\rm e}^{{\rm i} b(c+g)} u_{(a+e+c+g)(b+d)}$}.
  That is, expression \eqref{comp-two} is
  \[
  \begin{cases} {\rm e}^{{\rm i} b(c+g)} u_{(a+e+c+g)(b+d)} & \text{if } b=f\text{ and }d=h,\\
  0 & \text{otherwise}.
  \end{cases}
  \]
   From here, it is easy to see that expression \eqref{comp-one} becomes \eqref{comp-two} if $b=f$ and $d=h$. We also see that \eqref{comp-one} and \eqref{comp-two} are zero whenever $b+d\neq f+h$.
\end{proof}

\section{Lie groups}\label{Section:compact.Lie.groups}
Let us now proceed to a particularly simple example of a double groupoid. This case is a stepping stone for studying the more complicated case of compact singular Lie groups.

Any Lie group $ G $ can be made into a Lie double groupoid as below
\[
\begin{tikzcd}
 G \darrow \rarrow & G \darrow \\
\{* \} \rarrow &\, \{ * \}.
\end{tikzcd}
\]
The groupoid structure on $ G \grpd G $ is trivial with only identity arrows. Hence, the two ``product'' operations on $ G $ are given by the formulas
$g \circ g = g$, $g \bullet h = gh$.
Where $ gh$ refers to the group operation.

In Example~\ref{example:Lie.groups}, we saw that one can construct a canonical double Haar system on this groupoid out of a Haar measure $ \mu $ on $ G $. The resulting Haar system on the top groupoid $ G \grpd G $ is a family of counting measures (the target fibers are singletons). On the left groupoid $ G \grpd \{ * \} $ the Haar system is just $ \mu $ itself (the only target fiber is $ G $). Under these conventions, it is easy to see that \smash{$ \starcirc $} is just the usual pointwise multiplication of functions on $ G $ and \smash{$ \stardot $} is the usual convolution operation for the group
\[ \big(u \starcirc v\big) (g) = u(g) v(g), \qquad \big(u \stardot v \big) (g) = \int_{h \in G } u(h) v\big(h\inv g\big) {\rm d} \mu.\]
Even in this very simple case, the question of how these two product operations are related is not so trivial.

From now on, to simplify the notation we will denote the pointwise product \smash{$ \starcirc $} by juxtaposition by $ \cdot $ and the convolution \smash{$ \stardot $} with just $ * $.

\subsection{Matrix coefficients}\label{subsection:matrix.coefficients}

Matrix coefficients are a special class of functions on $ G $. They are the functions one obtains by looking at the coefficients of a matrix representation of $ G $. Since there is a close relationship between the representation theory of $ G $ and its convolution algebra, it is not too surprising that matrix coefficients of representations of $ G $ exhibit some special behavior under convolution.
\begin{Definition} A \textit{finite-dimensional representation} $(\pi,V_\pi)$ is a continuous homomorphism ${\pi\colon G\rightarrow \GL(V_\pi)}$ from $G$ into the linear automorphisms of a finite-dimensional complex space~$V_{\pi}$.
\end{Definition}
Given a representation, $ (\pi, V_\pi) $, we will use $ d_\pi $ to denote the dimension of the underlying vector space $ V_\pi $. For convenience, we will always assume that $ V_\pi $ comes equipped with an ordered basis $ \{ e^\pi_1, \ldots, e^\pi_{d_\pi} \} $. Such an ordered basis induces an inner product on $ V_\pi $.

When $ G $ is compact, we will additionally make the following assumptions (thus making $\pi$ a~\textit{unitary} representation):
\begin{itemize}\itemsep=0pt
  \item the underlying vector space $ V_\pi $ is equipped with a Hilbert space structure;
  \item the basis $ \{ e^{\pi}_1, \ldots, e^{\pi}_{d_\pi} \} $ is orthonormal;
  \item the image of $ \pi $ lies in the unitary group $ U(V_\pi) $.
\end{itemize}
For compact groups, these assumption are quite reasonable as it is always possible to find such an invariant inner product via averaging.
\begin{Definition} Given a representation $ (\pi, V_\pi) $, if $u$, $v$ are vectors in $V_\pi$, we define the \textit{matrix coefficient for $ (u,v) $} to be
$\pi_{uv} \colon G \to \mathbb{C}$, $\pi_{uv}(g) :=\langle u,\pi(g)v\rangle$.
\end{Definition}
\begin{Remark}
The Peter--Weyl theorem\footnote{The Peter--Weyl theorem also states that irreducible unitary representations of a compact Lie group are always finite-dimensional.} tells us that, if $ G $ is compact, then matrix coefficients of \emph{irreducible} unitary representations form an orthonormal basis for a dense subspace of $ L^2(G) $. In particular, matrix coefficients span an $ L^2$-dense subspace of $ C^\infty(G) $. We bring up this fact to motivate our point of view that computations involving matrix coefficients are, in some sense, ``general''.
\end{Remark}
\begin{notation}\label{Notation:matrix.coefficients}
We will focus on matrix coefficients that come from basis vectors so it is convenient to establish some notation for them. Given a finite-dimensional representation $ (\pi, V_\pi ) $ equipped with an ordered basis $ \{ e^\pi_1,\ldots, e^\pi_{d_{\pi}} \} $, then let
$1 \le i,j \le d_\pi$, $ \pi_{ij} \colon G \to \mathbb{C}$, $ \pi_{ij}(g):= \pi_{e_i e_j} = \langle e_i^{\pi},\pi(g)e_j^{\pi}\rangle$.
\end{notation}

\subsection{Compatibility equations}
Our overall objective is to obtain equations which are ``close'' to a compatibility law. We know that the naive compatibility law for $ * $ and $ \cdot $ cannot hold in general but our study so far suggests that some compatibility laws can hold so long as constraints are placed on which products are permitted.

Our next proposition examines how one side of the usual compatibility law interacts with the matrix coefficients of a representation.
 \begin{Proposition}\label{proposition:convolution.of.non.irreducible.coefficients}
  Let $ G $ be a Lie group and let $ \pi $ be a finite-dimensional representation of $ G $. Given arbitrary compactly supported smooth functions $ u, v \in C_c^\infty(G) $ and
  $1 \le i, k \le d_\pi $,
  we have that
$\sum_{j=1}^{d_\pi} (u \cdot \pi_{ij}) * ( v \cdot \pi_{jk}) = (u * v) \cdot \pi_{ik}$.
\end{Proposition}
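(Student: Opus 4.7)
The plan is to reduce the identity to the matrix-multiplication identity $\pi(gh) = \pi(g)\pi(h)$, which in coordinates reads
\[
\pi_{ik}(gh) = \sum_{j=1}^{d_\pi} \pi_{ij}(g)\, \pi_{jk}(h).
\]
This homomorphism identity is the only non-formal ingredient; everything else will be bookkeeping with the convolution formula on $G \grpd \{*\}$.

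First I would unfold the left-hand side pointwise. Fix $g \in G$. For each summand, the convolution formula gives
\[
\bigl( (u \cdot \pi_{ij}) * (v \cdot \pi_{jk})\bigr)(g) = \int_G u(h)\, \pi_{ij}(h)\, v(h^{-1}g)\, \pi_{jk}(h^{-1}g)\, d\mu(h).
\]
Since the sum over $j$ is finite and $u,v$ are compactly supported (so all integrals are absolutely convergent), I can interchange the finite sum with the integral to get
\[
\sum_{j=1}^{d_\pi} \bigl((u\cdot\pi_{ij}) * (v\cdot\pi_{jk})\bigr)(g) = \int_G u(h)\, v(h^{-1}g) \left[\, \sum_{j=1}^{d_\pi} \pi_{ij}(h)\, \pi_{jk}(h^{-1}g)\, \right] d\mu(h).
\]

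Next I would apply the homomorphism identity to the bracketed sum, with $g$ replaced by $h$ and $h$ replaced by $h^{-1}g$, yielding $\pi_{ik}(h \cdot h^{-1}g) = \pi_{ik}(g)$. This factor is independent of $h$, so it pulls out of the integral, giving
\[
\pi_{ik}(g) \int_G u(h)\, v(h^{-1}g)\, d\mu(h) = \pi_{ik}(g)\, (u * v)(g) = \bigl((u*v) \cdot \pi_{ik}\bigr)(g).
\]
Since $g$ was arbitrary, this proves the claimed identity.

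There is essentially no obstacle: the whole content is that summing the matrix coefficients against the appropriate index collapses, via the homomorphism property of $\pi$, a product of values at $h$ and $h^{-1}g$ into a single value at $g$, which then factors out of the convolution integral. The only point one should be careful about is justifying the interchange of sum and integral, but this is immediate because the sum is finite. No compactness of $G$ or unitarity of $\pi$ is needed for this proposition; the identity holds for any finite-dimensional representation of any Lie group with a left-invariant measure.
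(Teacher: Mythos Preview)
Your proof is correct and follows essentially the same approach as the paper: both expand the convolution pointwise, interchange the finite sum with the integral, apply the homomorphism identity $\pi_{ik}(gh)=\sum_j \pi_{ij}(g)\pi_{jk}(h)$ with $g\mapsto h$, $h\mapsto h^{-1}g$, and then factor $\pi_{ik}(g)$ out of the integral. Your closing remark that neither compactness of $G$ nor unitarity of $\pi$ is needed is also in line with the paper's observation immediately following the proposition.
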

\begin{proof}
Note that by the formula for matrix multiplication, for any $ g,h \in G $ we have that
$\pi_{ik}(gh) = \sum_{j=1}^{d_\pi} \pi(g)_{ij} \pi(h)_{jk}$.
Keeping this formula in mind, we can prove the proposition by a~direct calculation
\begin{align*}
   \sum_{j=1}^{d_\pi} ( u \cdot \pi_{ij} ) * ( v \cdot \pi_{jk} ) (g) &= \sum_{j=1}^{d_\pi} \int_{G} u(h) \pi(h)_{ij} v\big(h\inv g\big) \pi\big(h\inv g\big)_{jk} {\rm d} \mu(h) \\
  &= \int_{G} u(h) v\big(h\inv g\big) \left( \sum_{j=1}^{d_\pi} \pi_{ij}(h) \pi_{jk}\big(h\inv g\big) \right) {\rm d} \mu(h) \\
   &= \int_G u(h) v\big(h\inv g\big) \pi_{ik}(g) {\rm d} \mu(h)  \\
   &= \left( \int_G u(h) v\big(h\inv g\big) {\rm d} \mu(h)\right) \pi_{ik}(g) = ( u * v)(g) \cdot \pi_{ik}(g).\tag*{\qed}
\end{align*}\renewcommand{\qed}{}
\end{proof}

Note that the above proposition only relies on the fact that $ \pi $ is a finite-dimensional representation. The group $ G $ does not necessarily need to be compact. If we additionally assume $ G $ is compact, then we have the following as a special case of our lemma.
\begin{Corollary}\label{corollary:conv.of.non.irred}
  Suppose $ G $ is compact and the Haar measure is normalized so that ${\int_G 1 {\rm d} \mu = 1}$. If $ \pi $ is a finite-dimensional representation of $ G $, then for all
  $1 \le i, k \le d_\pi$,
  we have that \smash{$\sum_{j=1}^{d_\pi} \pi_{ij} * \pi_{jk} = \pi_{ik}$}.
\end{Corollary}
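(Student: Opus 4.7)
The plan is to deduce the corollary as an immediate specialization of Proposition~\ref{proposition:convolution.of.non.irreducible.coefficients}. Since $G$ is compact, the constant function $\mathbf{1}_G \colon G \to \BBC$ with value $1$ lies in $C^\infty_c(G)$, so it is a legitimate choice for both $u$ and $v$ in the hypothesis of that proposition. First I would observe that the pointwise products trivialize: $\mathbf{1}_G \cdot \pi_{ij} = \pi_{ij}$ and $\mathbf{1}_G \cdot \pi_{jk} = \pi_{jk}$ for every index $j$, so the left-hand side of the proposition becomes exactly $\sum_{j=1}^{d_\pi} \pi_{ij} * \pi_{jk}$.

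Next I would compute the convolution $\mathbf{1}_G * \mathbf{1}_G$. By the definition of convolution on the group $G \grpd \{*\}$, one has
\[
(\mathbf{1}_G * \mathbf{1}_G)(g) = \int_{G} \mathbf{1}_G(h)\, \mathbf{1}_G\big(h^{-1}g\big)\, {\rm d}\mu(h) = \int_G 1 \, {\rm d}\mu = 1
\]
by the normalization hypothesis $\int_G 1 \, {\rm d}\mu = 1$. Therefore the right-hand side of the proposition collapses to $(\mathbf{1}_G * \mathbf{1}_G) \cdot \pi_{ik} = \mathbf{1}_G \cdot \pi_{ik} = \pi_{ik}$, giving the claimed identity $\sum_{j=1}^{d_\pi} \pi_{ij} * \pi_{jk} = \pi_{ik}$.

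There is essentially no obstacle here: the only point of friction in the general (non-compact) case would be that $\mathbf{1}_G$ is not compactly supported, which is exactly why compactness of $G$ appears as a hypothesis, and the only other input required is the normalization of $\mu$, which is precisely assumed. The whole content of the corollary is that matrix coefficients of a representation form an idempotent-like system under convolution once the ambient measure is normalized.
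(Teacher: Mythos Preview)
Your argument is correct and matches the paper's proof essentially line for line: apply Proposition~\ref{proposition:convolution.of.non.irreducible.coefficients} with $u=v=\mathbf{1}_G$, use compactness to justify that $\mathbf{1}_G\in C^\infty_c(G)$, and invoke the normalization to get $\mathbf{1}_G*\mathbf{1}_G=\mathbf{1}_G$.
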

\begin{proof}
   Since the Haar measure is normalized, $ 1 * 1 = 1 $. If we apply Proposition~\ref{proposition:convolution.of.non.irreducible.coefficients} to the case~${ u = 1}$, $v = 1 $, we immediately obtain
  \[ \sum_{j=1}^{d_\pi} \pi_{ij} * \pi_{jk} = \sum_{j=1}^{d_\pi} (1 \cdot \pi_{ij}) * (1 \cdot \pi_{jk}) = (1 * 1) \cdot \pi_{ik} = \pi_{ik}.\tag*{\qed}
   \]\renewcommand{\qed}{}
\end{proof}

Now let us state our main observation for this section, which is a kind of compatibility law. The caveat being that the compatibility appears to hold ``on average'' and requires that some terms come from a representation of $ G $.
\begin{Proposition}\label{proposition:new.weak.compatibility}
Let $ G $ be a compact Lie group and suppose $ \pi $ is a finite-dimensional representation of $ G $. For all
  $0 \le i,k \le d_\pi $
  and functions $ u, v \in C^\infty(G) $, we have that
\begin{equation*}\label{eqn:thm.weakly.hopeful}
  \sum_{j=1}^{d_\pi} (u \cdot \pi_{ij}) * ( v \cdot \pi_{jk} ) = \sum_{j=1}^{d_\pi} (u * v) \cdot (\pi_{ij} * \pi_{jk} ).
  \end{equation*}
\end{Proposition}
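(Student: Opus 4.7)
The plan is to observe that this proposition is essentially an immediate consequence of the two results just proved, namely Proposition~\ref{proposition:convolution.of.non.irreducible.coefficients} and Corollary~\ref{corollary:conv.of.non.irred}. First, I would apply Proposition~\ref{proposition:convolution.of.non.irreducible.coefficients} directly to the left-hand side to collapse the sum:
\[ \sum_{j=1}^{d_\pi} (u \cdot \pi_{ij}) * ( v \cdot \pi_{jk} ) = (u * v) \cdot \pi_{ik}. \]
So the content of the proposition is really that the right-hand side also simplifies to $(u*v) \cdot \pi_{ik}$.

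For the right-hand side, the factor $(u * v)$ does not depend on the summation index $j$, so by the bilinearity of pointwise multiplication I would pull it out of the sum:
\[ \sum_{j=1}^{d_\pi} (u * v) \cdot (\pi_{ij} * \pi_{jk}) = (u * v) \cdot \left( \sum_{j=1}^{d_\pi} \pi_{ij} * \pi_{jk}\right). \]
Then applying Corollary~\ref{corollary:conv.of.non.irred}, which uses compactness of $G$ and the normalization of the Haar measure, the inner sum equals $\pi_{ik}$. This gives $(u * v) \cdot \pi_{ik}$, matching the simplified left-hand side.

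Since both reductions are essentially bookkeeping on top of the already-established identities, I do not expect any genuine obstacle. The only subtlety to flag is that this argument tacitly uses that the Haar measure is normalized (so that Corollary~\ref{corollary:conv.of.non.irred} applies); if one did not assume normalization, an extra factor of $\int_G 1\,{\rm d}\mu$ would appear and the identity would need to be modified accordingly. In the compact case with the standard convention this is harmless, and the proof reduces to two lines citing the earlier results.
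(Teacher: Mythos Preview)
Your proposal is correct and matches the paper's proof essentially line for line: apply Proposition~\ref{proposition:convolution.of.non.irreducible.coefficients} to collapse the left-hand side to $(u*v)\cdot\pi_{ik}$, then use Corollary~\ref{corollary:conv.of.non.irred} to rewrite $\pi_{ik}$ as $\sum_j \pi_{ij}*\pi_{jk}$ and distribute. Your remark about the tacit normalization of the Haar measure is also apt, since Corollary~\ref{corollary:conv.of.non.irred} is stated under that hypothesis.
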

\begin{proof}
The proposition follows immediately from combining Corollary~\ref{corollary:conv.of.non.irred} with Proposition~\ref{proposition:convolution.of.non.irreducible.coefficients},
 \begin{align*}
 \sum_{j=1}^{d_\pi} ( u \cdot \pi_{ij}) * ( v \cdot \pi_{jk}) &= (u * v) \cdot \pi_{ik} = (u * v) \cdot \left( \sum_{j=1}^{d_\pi} \pi_{ij} * \pi_{jk} \right) \\
 &= \sum_{j=1}^{d_\pi} (u * v) \cdot ( \pi_{ij} * \pi_{jk} ). \tag*{\qed}
 \end{align*}\renewcommand{\qed}{}
\end{proof}

\subsection{Further remarks on the compact Lie group case}\label{subsection:further.remarks}
However, let us now provide a few different formulations and addition remarks on Proposition~\ref{proposition:new.weak.compatibility}. First, consider a special case: Given a representation $ \sigma $ of $ G $ if we take $ u = \sigma_{ab} $ and $ v = \sigma_{ac} $, we obtain, as a special case, the following equation:
\begin{equation}\label{eqn:new.compatibility.in.basis}
  \sum_{j=1}^{d_\pi} ( \sigma_{ab} \cdot \pi_{ij} ) * (\sigma_{bc} \cdot \pi_{jk} ) = \sum_{j=1}^{d_\pi} (\sigma_{ab} * \sigma_{bc} ) \cdot (\pi_{ij} * \pi_{jk}).
\end{equation}
This version of the equation is of some interest since it expresses our compatibility law purely in terms of matrix coefficients. It also bears resemblance to the compatibility law for a disjoint unions of pair groupoids.
This suggests a possibly deeper relationship since the Peter--Weyl theorem tells us that the convolution algebra of a compact group is isomorphic to the convolution algebra of a disjoint union of pair groupoids.

With that in mind, it is tempting to wonder whether or not it is possible to omit the sum from equation~\eqref{eqn:new.compatibility.in.basis} to obtain something of the form
\begin{equation}\label{eqn:new.compatibility.in.basis.hopeful}
  (\sigma_{ab} \cdot \pi_{ij} ) * (\pi_{jk} \cdot \sigma_{bc}) = (\sigma_{ab} * \sigma_{bc} ) \cdot (\pi_{ij} * \pi_{jk}).
\end{equation}
This would be nice since it would provide us with a ``true'' compatibility law. Unfortunately,\ equation~\eqref{eqn:new.compatibility.in.basis.hopeful} is generally false. To see a counter-example, we suggest computing the convolution products and point-wise products of irreducible matrix coefficients for $ G = \SU(2) $.

Some simplifications can be made if we assume $ \pi $ and $ \sigma $ are irreducible, unitary representations. In such a case, it is a relatively simple consequence of the Schur orthogonality that
$ \pi_{ij} * \pi_{jk} = \frac{1}{d_\pi} \pi_{ik}$
holds for irreducible unitary representations $ \pi $.

However, even if we assume $ \pi $ and $ \sigma $ are irreducible, equation~\eqref{eqn:new.compatibility.in.basis.hopeful} is false, as we can see in the $ G= \SU(2)$ case. The obstruction arises from examining how point-wise multiplication behaves for matrix coefficients. The product of two matrix coefficients turns out to be a matrix coefficient for the tensor product representation. However, it is a fact that tensor products of irreducible representations are not necessarily irreducible.\footnote{In fact, since matrix coefficients of irreducible representations form a basis, it is still possible to write the matrix coefficients of the tensor product representation $\pi\otimes\sigma$ in terms of the irreducible representations. However, to compute the coefficients explicitly is rather complex in practice. See the Littlewood--Richardson rule in~\cite{KnappBeyond} for an approach.} If we assume $ \pi$, $\sigma $ and $ \sigma \otimes \pi$ are all irreducible, then equation~\eqref{eqn:new.compatibility.in.basis.hopeful} holds. For example, if $ G $ is a torus, then tensor products of irreducible representations are indeed irreducible (but they are also all one-dimensional). Ultimately, this case is not so helpful as we are not aware of any example where $ \sigma \otimes \pi $ is both irreducible and has dimension greater than one.

\section{Compact singular Lie groups}\label{Section:compact.singular.Lie.groups}
We will now look at some examples of 2-groups where we can carry out a generalization of the computation we did for the noncommutative torus. We refer to these examples as ``compact singular Lie groups''. As singular spaces, they represent quotients of compact Lie groups by (possibly non-closed) discrete normal subgroups.

Throughout this section, let $ G $ be a compact connected Lie group and let $ K \subset G $ be a~countable, normal subgroup. We do not assume that $ K $ is closed. An immediate consequence of $ K $ being countable and normal (and $ G $ being connected) is that $ K $ is a subgroup of the center of~$ G $. There is a natural action of $ K $ on $ G $ via the inclusion and therefore we can form an action groupoid
$K \ltimes G \grpd G $.
As a reminder, the structure maps for this groupoid are as follows:
\[ s(\kappa, g) = g, \qquad t(\kappa, g) = \kappa g,\qquad
 (\kappa_1, \kappa_2 g) \circ (\kappa_2, g) = (\kappa_1 \kappa_2, g),  \qquad
 i(\kappa,g) = \big( \kappa^{-1}, \kappa g\big). \]
On the other hand, there is also the usual product group structure on $ K \ltimes G $,
$(\kappa_1, g_1) \bullet (\kappa_2, g_2) = (\kappa_1 \kappa_2, g_1 g_2) $.
These two composition operations come from a double groupoid of the form
\[
\begin{tikzcd}
K \ltimes G \darrow \rarrow & G \darrow \\
\{* \} \rarrow &\, \{ * \}.
\end{tikzcd}
\]
\begin{Remark}
   The compatibility of these two operations relies on the fact that $ K $ is a subgroup of the center of $ G $. If $ K $ is not countable, this construction still results in a Lie double groupoid so long as $ K $ is an immersed Lie subgroup of the center. However, we will focus on the countable case.
\end{Remark}

\begin{notation}
We are concerned with compactly supported functions on the product space $ K \times G $. Generally, given $ \kappa \in K $ we will write $ \delta_\kappa \in C^\infty(K) $ to denote the standard delta function for the element $ \kappa$. Since $ K $ is equipped with a discrete structure, these functions are ``smooth'' for our purposes.

Given sets $ A $ and $ B $ and functions $ u \colon A \to \BBC $, $ V \colon B \to \BBC $, we write $ u \otimes v $ to denote the function~${u \otimes v \colon A \times B \to \BBC}$, $ (u \otimes v)(a,b) = u(a) b(c)$.
We are primarily interested in functions on $ K \times G $, so we will frequently consider functions of the form $ \delta_\kappa \otimes u $ where $ u $ is a function on~$ G $ and $ \kappa $ is an element of $ K $. Such a function should be thought of as a function supported on the $ \kappa$-level of $ K \times G $.

In order shorten our equations somewhat, given $ \kappa \in K $ and a smooth function $ u \in C^\infty(G) $ we will use superscripts
$u^\kappa := \delta_{\kappa} \otimes u $
as a shorthand.
\end{notation}
\subsection{Computing convolution products}
We can now establish some formulas for the convolution products of matrix coefficients on $K\ltimes G$. Our first formula is just an immediate consequence of the definition of the fact that $ \bullet $ is just the product group structure on $ K \times G $.
\begin{Lemma}
  Given $ \kappa, \lambda \in K $ and functions $ u, v \in C^\infty(G) $, then we have that
  $ \big(u^\kappa \stardot v^\lambda \big) = (u * v )^{\kappa \lambda}$.
  Where, in the above formula, $ * $ denotes convolution of functions on $ G $.
\end{Lemma}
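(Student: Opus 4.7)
The plan is to simply unwind the definitions: since $\bullet$ is the direct product group structure on $K \times G$ and the horizontal Haar measure is the product of the counting measure on $K$ with the chosen Haar measure $\mu$ on $G$, the convolution $\stardot$ decouples into a sum over $K$ and an integral over $G$. The support hypothesis on the two factors then forces both indexing variables.

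Concretely, I would first write down
\[
\big(u^\kappa \stardot v^\lambda\big)(\nu,g) = \sum_{\mu \in K} \int_G u^\kappa(\mu,x)\, v^\lambda\big((\mu,x)^{-1} \bullet (\nu,g)\big)\, {\rm d}\mu(x),
\]
using that inversion in the direct product group sends $(\mu,x)$ to $(\mu^{-1}, x^{-1})$ so that $(\mu,x)^{-1} \bullet (\nu,g) = (\mu^{-1}\nu, x^{-1}g)$. Then I would observe that $u^\kappa(\mu,x) = \delta_\kappa(\mu) u(x)$ kills all terms in the sum except $\mu = \kappa$, and that $v^\lambda(\mu^{-1}\nu, x^{-1}g) = \delta_\lambda(\mu^{-1}\nu) v(x^{-1}g)$ then forces $\nu = \kappa\lambda$. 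In that case the remaining expression becomes
\[
\int_G u(x) v\big(x^{-1}g\big)\, {\rm d}\mu(x) = (u * v)(g),
\]
while for $\nu \neq \kappa\lambda$ the expression vanishes, giving exactly $(u*v)^{\kappa\lambda}(\nu,g)$.

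There is no real obstacle here; this is purely a bookkeeping exercise. The only things worth verifying carefully are (i) that the horizontal Haar system constructed by the Theorem on double Haar systems really is the product measure described above (which is immediate because the horizontal groupoid is a group over a point, and the $\mu^D$ along the trivial double target fibers contributes nothing), and (ii) that the two delta-function constraints really pin the variables as claimed. Both of these are routine, so the proof is essentially one line of computation plus a case split on whether $\nu = \kappa\lambda$.
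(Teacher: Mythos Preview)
Your proposal is correct and matches the paper's approach exactly: the paper does not even spell out a proof, merely noting that the formula ``is just an immediate consequence of the definition of the fact that $\bullet$ is just the product group structure on $K \times G$,'' and your computation is precisely the one-line unwinding of that remark. The only cosmetic issue is that you use $\mu$ both for the summation variable in $K$ and for the Haar measure on $G$; rename one of them to avoid confusion.
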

Slightly more complicated is the other product. In order to perform the calculation for~\smash{$ \starcirc $}, let us quickly establish some notation for the translation operator: Given a function ${f \in C_c^\infty(K \ltimes G)}$, there is a natural action of $ K $ by left translation. Given $ \kappa \in K $ let
$L_\kappa f \colon G \to \BBC$, $(L_\kappa f) (x) := f(\kappa x)$.
The translation operator interacts with convolution in $ G $ as follows.

\begin{Lemma}\label{lemma:translation.and.convolution}
  Let $ u, v \in C^\infty(G) $ and suppose $ \kappa \in G $. Then
  $L_{\kappa} (u * v) = (L_\kappa u) * v$.
  If $ \kappa \in Z(G) $ then we have that
  $ L_\kappa( u * v) = u * (L_{\kappa} v)$.
\end{Lemma}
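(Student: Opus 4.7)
The plan is to prove both identities by a direct computation: unfold the definition of the convolution on $G$, change variables in the resulting integral using the left-invariance of the Haar measure $\mu$, and use the centrality hypothesis when needed. Recall that
\[
(u * v)(x) = \int_G u(h)\, v\bigl(h\inv x\bigr) {\rm d}\mu(h),
\]
so in both parts I would start from $L_\kappa(u*v)(x) = (u*v)(\kappa x) = \int_G u(h)\, v(h\inv \kappa x)\, {\rm d}\mu(h)$ and manipulate this integral in two different ways.

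For the first identity, I would perform the substitution $h \mapsto \kappa h$ in the integral. Because $\mu$ is a left Haar measure on $G$, this change of variables preserves the measure, and the integrand becomes $u(\kappa h)\, v(h\inv \kappa\inv \kappa x) = (L_\kappa u)(h)\, v(h\inv x)$. Collecting terms yields exactly $((L_\kappa u) * v)(x)$, establishing the first claim without any hypothesis on $\kappa$.

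For the second identity, no substitution is required; instead the centrality of $\kappa$ is used inside the integrand. Starting from $(u * L_\kappa v)(x) = \int_G u(h)\, v(\kappa h\inv x) {\rm d}\mu(h)$, the hypothesis $\kappa \in Z(G)$ gives $\kappa h\inv = h\inv \kappa$, so the integrand equals $u(h)\, v(h\inv \kappa x)$. Integrating, this is precisely $(u*v)(\kappa x) = L_\kappa(u*v)(x)$.

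There is no real obstacle here: the argument is essentially two lines of elementary manipulation, and the only points worth being careful about are (i) tracking that one uses \emph{left} invariance of $\mu$ in the first identity, and (ii) noting that centrality enters in the second identity precisely at the step where $\kappa$ must commute past $h\inv$ rather than past $h$. The second identity can also be derived by combining the first identity with the fact that convolution is associative in an appropriate sense, but the direct approach above is shorter.
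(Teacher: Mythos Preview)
Your proposal is correct and matches the paper's proof essentially line for line: the first identity is obtained via the substitution $h \mapsto \kappa h$ using left invariance of the Haar measure, and the second by commuting $\kappa$ past $h^{-1}$ directly in the integrand. The only cosmetic difference is that for the second identity you start from $(u*L_\kappa v)(x)$ and rewrite toward $L_\kappa(u*v)(x)$, whereas the paper goes in the opposite direction.
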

\begin{proof}
  Let $x\in G$ be fixed. Then a simple change of variables $ h \mapsto k h $ yields
    \begin{align*}L_\kappa(u * v)(x) =  \int_G u(h)v\big(h^{-1}\kappa x\big){\rm d}h
    = \int_G u(\kappa h) v\big(h\inv x\big) {\rm d}h.
    \end{align*}
    For the second part, we use the fact that $ \kappa $ commutes with $ h $ to get
  \begin{align*}
  L_\kappa(u * v)(x) =  \int_G u(h)v\big(h^{-1}\kappa x\big){\rm d}h
    = \int_G u(h)v\big(\kappa h^{-1}x\big){\rm d}h=(u * L_\kappa v)(x).\tag*{\qed}
    \end{align*}\renewcommand{\qed}{}
\end{proof}

\begin{Lemma}\label{lemma:compute.star.circ}
  Given $ \kappa, \lambda \in K $ and functions $ u, v \in C^\infty(G) $, then we have that
  $u^\kappa \starcirc v^\lambda = ( L_{\lambda} u \cdot v )^{\kappa \lambda}$.
\end{Lemma}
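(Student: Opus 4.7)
The plan is to prove this by a direct unpacking of the definition of $\starcirc$ on the action groupoid $K \ltimes G \grpd G$, together with the explicit form of $u^\kappa$ and $v^\lambda$ as delta-supported tensor products. Since $K$ is discrete, the target fibers of the vertical structure are countable and the vertical Haar system is (a constant multiple of) counting measure; the groupoid structure maps are all given explicitly in the excerpt, so no non-trivial analysis is needed beyond parametrizing a fiber.

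First I would fix a point $(\kappa_0, x) \in K \times G$ and write down the target fiber $t^{-1}(\kappa_0 x)$ explicitly as $\{(\mu, \mu^{-1}\kappa_0 x) : \mu \in K\}$, using the given target map $t(\kappa, g) = \kappa g$. Inversion gives $(\mu, \mu^{-1}\kappa_0 x)^{-1} = (\mu^{-1}, \kappa_0 x)$, and composing with $(\kappa_0, x)$ via the rule $(\kappa_1, \kappa_2 g) \circ (\kappa_2, g) = (\kappa_1 \kappa_2, g)$ yields $(\mu^{-1}\kappa_0, x)$. Plugging this into the general convolution formula produces
\[
(f \starcirc g)(\kappa_0, x) = \sum_{\mu \in K} f(\mu, \mu^{-1}\kappa_0 x)\, g(\mu^{-1}\kappa_0, x)
\]
for any $f, g \in C_c^\infty(K \ltimes G)$, which is the working formula I need.

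Next I would substitute $f = u^\kappa$ and $g = v^\lambda$. The delta factor in $u^\kappa$ forces $\mu = \kappa$, and the delta factor in $v^\lambda$ then forces $\mu^{-1}\kappa_0 = \lambda$, i.e.\ $\kappa_0 = \kappa\lambda$. When these two conditions hold, the surviving summand equals $u(\kappa^{-1}\kappa\lambda x)\, v(x) = u(\lambda x)\, v(x) = (L_\lambda u)(x)\, v(x)$; otherwise the sum is zero. Reading off the result as a function of $(\kappa_0, x)$, this is precisely $(L_\lambda u \cdot v)^{\kappa\lambda}$, which is the claim. There is no real obstacle here — the only thing to be careful about is the bookkeeping that ensures it is $\lambda$ (rather than $\kappa$ or $\kappa\lambda$) that translates $u$, which comes directly from solving the two delta constraints in the correct order.
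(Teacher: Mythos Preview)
Your proposal is correct and follows essentially the same approach as the paper: a direct computation of the convolution formula for the action groupoid $K\ltimes G$ with counting measure, tracking the two delta constraints to identify the surviving term. The paper's proof is the same calculation written at a point $(x,y)\in K\times G$ (with $x\in K$, $y\in G$), arriving at $u(\lambda y)v(y)\delta_{\kappa\lambda}(x)$ by the identical reasoning.
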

\begin{proof}
The proof is a direct calculation. Let us compute $ u^\kappa \starcirc v^\lambda $ at a point $ (x,y ) \in K \times G $,
\begin{align*}
  \big[ (\delta_{\kappa} \otimes u ) \starcirc (\delta_{\lambda} \otimes v ) \big] (x,y) &=
  \sum_{\ell \in K } ( \delta_\kappa \otimes u) \big(\ell, \ell\inv x y\big) \cdot (\delta_{\lambda} \otimes v )\big(\ell\inv x, y\big) \\
  &= u\big(\kappa\inv x y \big) v(y) \delta_{\lambda}\big(\kappa\inv x\big) = u\big(\kappa\inv x y \big) v(y) \delta_{\kappa \lambda}(x) \\
  &= u(\lambda y ) v(y) \delta_{\kappa \lambda}(x)
  = \delta_{\kappa \lambda} \otimes (( L_{\lambda}(u)) \cdot v).
\end{align*}
The second line follows from the fact that the only non-zero term of the sum occurs when $ \ell = \kappa $ and the second to last equality follows from the fact that the input for $ u $ only matters when $ x = \kappa \lambda $.
\end{proof}

\subsection{Compatibility for matrix coefficients}
We will now prove a version of Propositions~\ref{proposition:convolution.of.non.irreducible.coefficients} and~\ref{proposition:new.weak.compatibility} that is generalized to the case of a~compact singular Lie group.
\begin{Proposition}\label{proposition:new.weak.compat}
   Let $ \pi $ be a finite-dimensional representation of $ G $ and let $ \kappa_1, \kappa_2,$ $\lambda_1, \lambda_2 \in K $. Given functions $ u, v \in C^\infty(G) $ and $ 1 \le i,k \le d_\pi $, we have that
  \[
   \sum_{j = 1}^{d_\pi} \big( u^{\kappa_1} \starcirc \pi_{ij}^{\lambda_1} \big) \stardot \big( v^{\kappa_2} \starcirc \pi_{jk}^{\lambda_2} \big) =  ( (L_{\lambda_1}u * L_{\lambda_2} v ) \cdot \pi_{ik} )^{\kappa_1 \lambda_1 \kappa_2 \lambda_2}.
  \]
\end{Proposition}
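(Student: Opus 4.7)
The plan is to reduce the statement to Proposition \ref{proposition:convolution.of.non.irreducible.coefficients} by first peeling off the $\starcirc$-products and the $\stardot$-product using the two preceding lemmas. All of the ingredients are already in place; the argument is essentially a bookkeeping exercise, and I do not anticipate a serious obstacle — just a careful chain of rewrites.

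First I would apply Lemma \ref{lemma:compute.star.circ} to each of the two $\starcirc$ factors in the left-hand side, turning
\[
u^{\kappa_1} \starcirc \pi_{ij}^{\lambda_1} = \bigl(L_{\lambda_1} u \cdot \pi_{ij}\bigr)^{\kappa_1 \lambda_1}, \qquad
v^{\kappa_2} \starcirc \pi_{jk}^{\lambda_2} = \bigl(L_{\lambda_2} v \cdot \pi_{jk}\bigr)^{\kappa_2 \lambda_2}.
\]
Next, I would apply the preceding lemma identifying $\stardot$ with group convolution on $G$ at the combined level $\kappa_1\lambda_1\kappa_2\lambda_2$, obtaining
\[
\bigl(u^{\kappa_1} \starcirc \pi_{ij}^{\lambda_1}\bigr) \stardot \bigl(v^{\kappa_2} \starcirc \pi_{jk}^{\lambda_2}\bigr) = \Bigl(\bigl(L_{\lambda_1} u \cdot \pi_{ij}\bigr) * \bigl(L_{\lambda_2} v \cdot \pi_{jk}\bigr)\Bigr)^{\kappa_1 \lambda_1 \kappa_2 \lambda_2}.
\]

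The key point now is that the $K$-level $\kappa_1\lambda_1\kappa_2\lambda_2$ is independent of the summation index $j$, so the sum pulls inside the superscript. This reduces the problem to computing
\[
\sum_{j=1}^{d_\pi} \bigl(L_{\lambda_1} u \cdot \pi_{ij}\bigr) * \bigl(L_{\lambda_2} v \cdot \pi_{jk}\bigr)
\]
purely as functions on $G$. Applying Proposition \ref{proposition:convolution.of.non.irreducible.coefficients} with the functions $L_{\lambda_1} u$ and $L_{\lambda_2} v$ in place of $u$ and $v$ collapses the $j$-sum into $(L_{\lambda_1} u * L_{\lambda_2} v)\cdot \pi_{ik}$, yielding exactly the claimed identity.

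The only place one could slip up is ensuring that the exponent $\kappa_1\lambda_1\kappa_2\lambda_2$ comes out the same on both sides, which follows because $K$ lies in the center of $G$ so the order of the factors does not matter; this is also why it is legitimate to move $L_{\lambda_1}$ past the convolution when comparing to the statement. Hence the proof amounts to a three-line computation chaining Lemma \ref{lemma:compute.star.circ}, the $\stardot$-formula, and Proposition \ref{proposition:convolution.of.non.irreducible.coefficients}.
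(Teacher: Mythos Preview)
Your proposal is correct and follows essentially the same route as the paper: apply Lemma~\ref{lemma:compute.star.circ} to each $\starcirc$-factor, use the $\stardot$-formula to pass to a group convolution at level $\kappa_1\lambda_1\kappa_2\lambda_2$, and then invoke Proposition~\ref{proposition:convolution.of.non.irreducible.coefficients} to collapse the $j$-sum. Your final remarks about centrality of $K$ and moving $L_{\lambda_1}$ past convolution are not actually needed for \emph{this} proposition (the exponent $\kappa_1\lambda_1\kappa_2\lambda_2$ appears directly with no reordering, and the right-hand side already has $L_{\lambda_1}u * L_{\lambda_2}v$); those observations become relevant only in the proof of the subsequent Theorem~\ref{theorem:new.main}.
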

\begin{proof}
We do a direct calculation, taking advantage of Lemmas~\ref{lemma:compute.star.circ} and~\ref{lemma:translation.and.convolution}, and Proposition~\ref{proposition:convolution.of.non.irreducible.coefficients},
\begin{align*}
  \sum_{j = 1}^{d_\pi} \big( u^{\kappa_1} \starcirc \pi_{ij}^{\lambda_1} \big) \stardot \big( v^{\kappa_2} \starcirc \pi_{jk}^{\lambda_2} \big)
  &= \sum_{j = 1}^{d_\pi} ( L_{\lambda_1} u \cdot \pi_{ij})^{\kappa_1\lambda_1} \stardot ( L_{\lambda_2}
 v \cdot \pi_{jk} )^{\kappa_2 \lambda_2} \\
  &=\sum_{j=1}^{d_\pi} \left( (L_{\lambda_1} u \cdot \pi_{ij}) * (L_{\lambda_2} v \cdot \pi_{jk})\right)^{\kappa_1 \lambda_1 \kappa_2 \lambda_2} \\
  &= \left( (L_{\lambda_1} u * L_{\lambda_2} v ) \cdot \pi_{ik} \right)^{\kappa_1 \lambda_1 \kappa_2 \lambda_2 }.\tag*{\qed}
  \end{align*}\renewcommand{\qed}{}
\end{proof}

Similar to the case for compact Lie groups, we state the main theorem, which gives us a~``weak compatibility'' law.
\begin{Theorem}\label{theorem:new.main}
 Let $ \pi $ be a finite-dimensional, unitary representation of $ G $. Let $ u, v \in C^\infty(G) $ be smooth functions and $ \kappa_1, \kappa_2, \lambda_1, \lambda_2 \in K $. Then for
 $1 \le i,k \le d_{\pi} $, we have that
  \[
   \sum_{j = 1}^{d_\pi} \big( u^{\kappa_1} \starcirc \pi_{ij}^{\lambda_1} \big) \stardot \big( v^{\kappa_2} \starcirc \pi_{jk}^{\lambda_2} \big) =   \sum_{j = 1}^{d_\pi} \big( u^{\kappa_1} \stardot  v^{\kappa_2} \big) \starcirc \big(\pi_{ij}^{\lambda_1} \stardot \pi_{jk}^{\lambda_2} \big).
  \]
\end{Theorem}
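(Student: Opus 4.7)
The plan is to reduce both sides of the claimed identity to the same closed-form expression living on the $\kappa_1\kappa_2\lambda_1\lambda_2$-level of $K \times G$. The left-hand side is already essentially done: Proposition \ref{proposition:new.weak.compat} gives
\[
\sum_{j=1}^{d_\pi} \bigl(u^{\kappa_1} \starcirc \pi_{ij}^{\lambda_1}\bigr) \stardot \bigl(v^{\kappa_2} \starcirc \pi_{jk}^{\lambda_2}\bigr) = \bigl((L_{\lambda_1} u * L_{\lambda_2} v)\cdot \pi_{ik}\bigr)^{\kappa_1\lambda_1\kappa_2\lambda_2}.
\]
So the task reduces to showing the right-hand side collapses to the same expression.

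For the right-hand side, I would first apply the product-group formula $u^{\kappa_1}\stardot v^{\kappa_2} = (u*v)^{\kappa_1\kappa_2}$ and $\pi_{ij}^{\lambda_1}\stardot \pi_{jk}^{\lambda_2} = (\pi_{ij}*\pi_{jk})^{\lambda_1\lambda_2}$ to each $\stardot$ factor, then apply Lemma \ref{lemma:compute.star.circ} to the $\starcirc$ to obtain
\[
\bigl(u^{\kappa_1}\stardot v^{\kappa_2}\bigr) \starcirc \bigl(\pi_{ij}^{\lambda_1}\stardot \pi_{jk}^{\lambda_2}\bigr) = \bigl(L_{\lambda_1\lambda_2}(u*v)\cdot (\pi_{ij}*\pi_{jk})\bigr)^{\kappa_1\kappa_2\lambda_1\lambda_2}.
\]
Only the $\pi_{ij}*\pi_{jk}$ factor depends on $j$, so summing over $j$ and pulling $L_{\lambda_1\lambda_2}(u*v)$ outside the sum, Corollary \ref{corollary:conv.of.non.irred} collapses $\sum_{j=1}^{d_\pi}\pi_{ij}*\pi_{jk}$ to $\pi_{ik}$. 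This yields
\[
\sum_{j=1}^{d_\pi}\bigl(u^{\kappa_1}\stardot v^{\kappa_2}\bigr) \starcirc \bigl(\pi_{ij}^{\lambda_1}\stardot \pi_{jk}^{\lambda_2}\bigr) = \bigl(L_{\lambda_1\lambda_2}(u*v)\cdot \pi_{ik}\bigr)^{\kappa_1\kappa_2\lambda_1\lambda_2}.
\]

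To match the two expressions I still need two small observations: the $K$-level indices $\kappa_1\lambda_1\kappa_2\lambda_2$ and $\kappa_1\kappa_2\lambda_1\lambda_2$ agree because $K$ lies in the center of $G$; and the ``translation vs.\ convolution'' identity
\[
L_{\lambda_1\lambda_2}(u*v) = (L_{\lambda_1}u)*(L_{\lambda_2}v)
\]
holds. For the latter, both factors $\lambda_1,\lambda_2$ lie in $Z(G)$, so Lemma \ref{lemma:translation.and.convolution} gives $L_{\lambda_2}(u*v)=u*(L_{\lambda_2}v)$ and then $L_{\lambda_1}(u*(L_{\lambda_2}v))=(L_{\lambda_1}u)*(L_{\lambda_2}v)$, completing the argument.

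The proof is essentially assembly: all the heavy lifting has already been done in the earlier lemmas. If there is a main obstacle, it is purely bookkeeping—namely, being careful that the two sums (one over $j$ on each side) really do simplify to single closed-form expressions that are \emph{identical}, not merely analogous. The compactness of $G$ and the normalization of Haar measure enter precisely through the invocation of Corollary \ref{corollary:conv.of.non.irred}, which is what makes the $j$-sum on the right-hand side collapse to the same matrix coefficient $\pi_{ik}$ that appears on the left.
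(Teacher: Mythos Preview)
Your proposal is correct and follows essentially the same route as the paper: both arguments compute each side using Proposition~\ref{proposition:new.weak.compat}, Lemma~\ref{lemma:compute.star.circ}, the product-group formula for $\stardot$, and Corollary~\ref{corollary:conv.of.non.irred}, and then finish by invoking centrality of $K$ and Lemma~\ref{lemma:translation.and.convolution}. The only cosmetic difference is that you collapse both sides to the single closed form $\bigl((L_{\lambda_1}u * L_{\lambda_2}v)\cdot\pi_{ik}\bigr)^{\kappa_1\kappa_2\lambda_1\lambda_2}$, whereas the paper leaves both sides as $j$-sums before matching; the ingredients and logic are identical.
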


\begin{proof}
By combining Proposition~\ref{proposition:new.weak.compat} with Corollary~\ref{corollary:conv.of.non.irred}, the left-hand side of the equation can be seen to be equal to
\smash{$\sum_{j=1}^{d_\pi} \left( (L_{\lambda_1} u * L_{\lambda_2} v)  \cdot (\pi_{ij} * \pi_{jk}) \right)^{\kappa_1 \lambda_1 \kappa_2 \lambda_2 }$}.
 On the other hand, if we compute the right-hand side, we get
 \begin{align*}
     \sum_{j = 1}^{d_\pi} \big( u^{\kappa_1} \stardot  v^{\kappa_2} \big) \starcirc \big(\pi_{ij}^{\lambda_1} \stardot \pi_{jk}^{\lambda_2} \big)
     &=   \sum_{j = 1}^{d_\pi} (u * v)^{\kappa_1 \kappa_2} \starcirc (\pi_{ij} *\pi_{jk})^{\lambda_1 \lambda_2} \\
     &= \sum_{j=1}^{d_\pi} \big( L_{\lambda_1 \lambda_2} (u * v) \cdot (\pi_{ij} * \pi_{jk} ) \big)^{\kappa_1 \kappa_2 \lambda_1 \lambda_2 }.
 \end{align*}
 Therefore, the desired equality holds if
$\kappa_1 \kappa_2 \lambda_1 \lambda_2 = \kappa_1 \lambda_1 \kappa_2 \lambda_2$
 and
$ L_{\lambda_1 \lambda_2} (u * v) = L_{\lambda_1} u * L_{\lambda_2} v$.
Since $ K $ is assumed to be a subgroup of the center, it is abelian so the first equality holds. The second equality follows from Lemma~\ref{lemma:translation.and.convolution}.
\end{proof}

\subsection{Further remarks on the singular Lie group case}
Similar to our observations in Section~\ref{subsection:further.remarks}, there are a few similar formulas one can obtain by looking at special cases of Theorem~\ref{theorem:new.main}.

If we additionally assume that $ u = \sigma_{ab} $ and $ v = \sigma_{bc} $ for some representation $ \sigma $ of $ G $. Theorem~\ref{theorem:new.main} instead states that
  \[
   \sum_{j = 1}^{d_\pi} \big( \sigma_{ab}^{\kappa_1} \starcirc \pi_{ij}^{\lambda_1} \big) \stardot \big( \sigma_{bc}^{\kappa_2} \starcirc \pi_{jk}^{\lambda_2} \big) =   \sum_{j = 1}^{d_\pi} \big( \sigma_{ab}^{\kappa_1} \stardot  \sigma_{bc}^{\kappa_2} \big) \starcirc \big(\pi_{ij}^{\lambda_1} \stardot \pi_{jk}^{\lambda_2} \big).
  \]
  If $ \sigma $, $ \pi $ and the tensor product representation $ \sigma \otimes \pi $ are all irreducible, then it is possible to show that
    \smash{$
    \big( \sigma_{ab}^{\kappa_1} \starcirc \pi_{ij}^{\lambda_1} \big) \stardot \big( \sigma_{bc}^{\kappa_2} \starcirc \pi_{jk}^{\lambda_2} \big) =  \big( \sigma_{ab}^{\kappa_1} \stardot  \sigma_{bc}^{\kappa_2} \big) \starcirc \big(\pi_{ij}^{\lambda_1} \stardot \pi_{jk}^{\lambda_2} \big)$}.
  In fact, this is precisely what occurs in the case of a noncommutative torus. Compare the above formula to the computation from Section~\ref{Section:noncommutative.torus}.

   Theorem~\ref{theorem:new.main} can be thought of as ``equivalent'' to compatibility of the groupoid operations (i.e., equation~\eqref{eqn:intertwininglaw}) in the following sense. Even if $ K $ is not a subgroup of the center, the operations $ \circ $ and $ \bullet $ can still be defined. However, they will not be compatible. Theorem~\ref{theorem:new.main} holds for such operations if and only if $ K $ is a subgroup of the center. The crucial step occurs in the final lines of the proof where one needs that $ L_\kappa (u * v) = u * L_{\kappa} v $ for arbitrary functions~$ u $ and $ v $ and elements $ \kappa \in K $. It is not too difficult to convince oneself that this can only occur when $ K \subset Z(G) $.

\section{Conclusions and towards a more general approach}
The computations we performed in the last three sections are rather specialized to their specific cases. Furthermore, the results obtained in Sections~\ref{Section:compact.Lie.groups} and~\ref{Section:compact.singular.Lie.groups} are a bit unsatisfying as they involve compromises in the form of the compatibility law that seem somewhat artificial.

Our methods so far have focused on looking purely at the algebra structures on $ C(G) $ while only relying on the structure of the double groupoid to perform the computations. However, double groupoids admit a rich variety of structures and it is certain that more can be said if we permit ourselves to ``remember'' more of this data. In particular, associated to any double groupoid is an infinite family of groupoids formed out of spaces of composable elements. Each of these groupoids comes equipped with their own convolution operations and more.

To see what we mean by this, consider first a general double groupoid
 \begin{equation*}
  \begin{tikzcd}
\CG \darrow \rarrow & \CK \darrow \\
\CH \rarrow & \,M.
\end{tikzcd}
\end{equation*}
Write $ \CG^{\boxplus} $ to denote the set of ``composable $2\times 2$ squares'' (for a precise definition, see equation~\eqref{equation:groupoid.box}). The compatibility law for the double groupoid says that the following diagram commutes:
\[
\begin{tikzcd}
  \CG^\boxplus \arrow[r] \arrow[d] & \CG \times_{s^V,t^V} \CG \arrow[d] \\
  \CG \times_{s^H,t^H} \CG \arrow[r] &\,  \CG,
\end{tikzcd}
\]
where the horizontal maps correspond to multiplying horizontally and the vertical maps correspond to multiplying vertically.

Using a double Haar system, one can obtain measures on the fibers of these maps. Since convolution is essentially given by integration, along a multiplication map one obtains a commutative diagram
\[
\begin{tikzcd}
  C^\infty_c(\CG^\boxplus) \arrow[d] \arrow[r] & C_c^\infty(\CG \times_{s^V, t^V} \CG ) \arrow[d] \\
  C^\infty_c(\CG \times_{s^H, t^H} \CG) \arrow[r] &\, C^\infty_c(\CG).
\end{tikzcd}
\]
Some work is required to prove this carefully and it relies on the invariance properties for double Haar systems.

It is our intention to explore this direction further in a future article, where we take a closer look at double Haar systems and the double simplicial complex of a double groupoid.

\subsection*{Acknowledgements}

The authors would like to thank Xiang Tang for some helpful comments on the topic. The authors would like to thank the participants of the Weekend Workshop on Representation Theory and Noncommutative Geometry in Washington University in St.~Louis for their valuable feedback and suggestions. The authors would like to also acknowledge the anonymous referee who provided several ideas for improvements to this article. This article is based on work that was supported by the National Science Foundation (Award Numbers 2137999 and 2213097).

\pdfbookmark[1]{References}{ref}
\LastPageEnding

\end{document}